\newtheorem{theorem}{Theorem}
\newtheorem{lemma}[theorem]{Lemma}
\newtheorem{proposition}[theorem]{Proposition}
\newtheorem{remark}[theorem]{Remark}
\numberwithin{theorem}{section} 
\numberwithin{equation}{section}
\newcommand{\meas}{\mathrm{meas}}
\newcommand{\addresseshere}{%
  \enddoc@text\let\enddoc@text\relax
}
\providecommand{\norm}[1]{\lVert#1\rVert}
\begin{document}

\title[Sobolev-type inequalities on Cartan-Hadamard manifolds]{Sobolev-type inequalities on Cartan-Hadamard manifolds \\ and applications to some nonlinear diffusion equations}
\author{Matteo Muratori and Alberto Roncoroni}
\address{Matteo Muratori, Dipartimento di Matematica, Politecnico di Milano, Piazza Leonardo da Vinci 32, 20133 Milano, Italy}
\email{matteo.muratori@polimi.it}
\address{Alberto Roncoroni\footnote{Present address: Dipartimento di Matematica e Informatica “Ulisse Dini”, Universit\`a degli Studi di Firenze, Viale Morgagni 67/A, 50134 Firenze, Italy. Email address: \url{alberto.roncoroni@unifi.it}}, Dipartimento di Matematica ``Felice Casorati'', Universit\`a degli Studi di Pavia, Via Ferrata 5, 27100 Pavia, Italy}
\email{alberto.roncoroni01@universitadipavia.it}

\subjclass[2010]{Primary: 46E35. Secondary: 26D10, 35K10, 46E30, 58C40, 58J35.}
\keywords{Sobolev inequality; Poincar\'e inequality; Cartan-Hadamard manifolds; negative curvature; smoothing effect; porous medium equation.}

\begin{abstract}
We investigate the validity, as well as the failure, of Sobolev-type inequalities on Cartan-Hadamard manifolds under suitable bounds on the sectional and the Ricci curvatures. We prove that if the sectional curvatures are bounded from above by a negative power of the distance from a fixed pole (times a negative constant), then all the $ L^p $ inequalities that interpolate between Poincar\'e and Sobolev hold for \emph{radial} functions provided the power lies in the interval $ (-2,0) $. The Poincar\'e inequality was established by H.P.~McKean under a \emph{constant} negative bound from above on the sectional curvatures. If the power is equal to the critical value $ -2 $ we show that $ p $ must necessarily be bounded away from $ 2 $. Upon assuming that the Ricci curvature vanishes at infinity, the \emph{nonradial} version of such inequalities turns out to \emph{fail}, except in the Sobolev case. Finally, we discuss applications of the here-established Sobolev-type inequalities to \emph{optimal} smoothing effects for {radial} porous medium equations.
\end{abstract}

\maketitle

\section{Introduction} 

It is well known that on any \emph{Cartan-Hadamard} manifold $M$ of dimension $ N \ge 3 $, namely a complete, noncompact, simply connected Riemannian manifold with everywhere nonpositive sectional curvatures, the Euclidean \emph{Sobolev inequality}
\begin{equation}\label{eq-sob-euc}
\left\| f \right\|_{L^{2^\ast}\!(M)} \leq  C_N \left\| \nabla f \right\|_{L^2(M)} \quad \forall f \in C_{c}^1(M) \, , \qquad 2^\ast := \frac{2N}{N-2}
\end{equation}
holds, for some constant $C_N >0 $ depending only on $N$. This result can be proved in several ways: see for instance \cite[Lemma 8.1, Theorem 8.3]{Hebey} or \cite[Corollary 14.23, Remark 14.24]{Grig09}. On the other hand, if the sectional curvatures are bounded from above by a negative constant $ -k $, then in addition to \eqref{eq-sob-euc} also the \emph{Poincar\'e} inequality 
\begin{equation}\label{eq-poin-intro} 
\left\| f \right\|_{L^2(M)} \le \frac{2}{\sqrt{k} \left( N-1 \right) } \left\| \nabla f \right\|_{L^2(M)} \qquad \forall f \in C^1_c(M)  
\end{equation}
holds, or equivalently the infimum of the spectrum of (minus) the Laplace-Beltrami operator on $M$ is bounded from below by $ k(N-1)^2/4 $, i.e.~$ \Delta $ has a \emph{spectral gap}. This is a celebrated result due to H.P. McKean \cite{McKean}, which we discuss extensively in the appendix (Theorem \ref{mck-orig}). Note that the spectral bound is optimal as it is attained by the \emph{hyperbolic space} $ \mathbb{H}^N $ of curvature $ -k $.

Hence, if the sectional curvatures are bounded from above by $ k=0 $ then $M$ supports the Euclidean Sobolev inequality \eqref{eq-sob-euc}, and as soon as such bound becomes strictly negative $M$ also supports the Poincar\'e inequality \eqref{eq-poin-intro}. In other words, there is a ``jump'' of the $ L^p $ exponent in the left-hand side of the inequality $ \| f \|_p \le C \, \| \nabla f \|_2 $ with respect to curvature. The naive question that gave rise to the present paper is: what happens in between? That is, suppose the sectional curvatures of $M$ (which we denote by $ \mathrm{Sect}(x) $) satisfy a bound of the following type:
\begin{equation}\label{sezionale-intro} 
\mathrm{Sect}(x) \leq -  \mathsf{C} \, r^{-\beta} \qquad \forall x \in M \setminus B_{R} \, ,
\end{equation}
for some $ \beta \in (0,2] $ and $ \mathsf{C},R>0$, where $ r\equiv r(x):=\operatorname{dist}(x,o) $ denotes the geodesic distance of $x$ from a fixed point $o$ (the \emph{pole}). Then, what kind of inequalities of the form
\begin{equation}\label{Sobolev-type}
\left\| f \right\|_{L^{p}(M)} \leq  C_p \left\| \nabla f \right\|_{L^2(M)} 
\end{equation}
does $M$ support? The answers we find are nontrivial; one has to separate the \emph{sub-hyperbolic} range ($ \beta \in (0,2) $) and the \emph{quasi-Euclidean} case ($ \beta=2 $), borrowing a terminology from \cite{GMV}. 

\vspace{0.1cm}
Let us briefly describe the theorems we prove, which are stated precisely in Section \ref{sect:stat}. If $ \beta \in (0,2) $ we show that \eqref{Sobolev-type} holds for \emph{radial} functions for all $ p \in (2,2^\ast] $, with
\begin{equation}\label{bhv-cp}
C_p = \frac{C \, p^{\frac{2+\beta}{2(2-\beta)}}}{(p-2)^{\frac{\beta}{2-\beta}}} \, ,
\end{equation}
$C$ being another positive constant that depends only on $ N , \beta , \mathsf{C} , R$. The result is optimal with respect to $p$ (Theorem \ref{teo}). In the critical case $\beta=2$, inequality \eqref{Sobolev-type} (still for radial functions) starts to hold from a certain exponent $ \tilde{2} \in (2,2^\ast) $ that depends on $ N $ and $ \mathsf{C} $. This result is also optimal with respect to $p$, see Theorem \ref{teo2}. Finally, we prove in Theorem \ref{Controex teo} that out of the radial setting there is no hope to extend such inequalities: it is enough to assume that the Ricci curvature vanishes uniformly at infinity (regardless of the rate) to be able to construct a sequence of nonradial functions that make the constant $C_p$ in \eqref{Sobolev-type} blow up for every $ p<2^\ast $. The case $ \beta>2 $ is not interesting, as the sole inequality of the type of \eqref{Sobolev-type} that holds, even if restricted to radial functions, is the standard Sobolev one (we refer to Subsection \ref{ext}). 

The techniques of proof that we exploit take advantage of two main tools: \emph{one-dimensional weighted} functional inequalities and \emph{Laplacian-comparison} theorems, which are recalled in Subsections \ref{one-weigh} and \ref{lc}, respectively. The idea is to first study the radial inequalities on \emph{model manifolds}, namely spherically-symmetric Riemannian manifolds whose metric $g$ reads
\begin{equation}\label{model metric-intro}
g  \equiv dr^2 + \psi(r)^2 \, d\theta_{\mathbb{S}^{N-1}}^2 
\end{equation}
for some regular ``model'' function $\psi: \mathbb{R}^+ \rightarrow \mathbb{R}^+ $ (see Definition \ref{def-A}), where $ d\theta_{\mathbb{S}^{N-1}}^2$ is the usual metric on the $ (N-1) $-dimensional unit sphere $ \mathbb{S}^{N-1} $. In this framework, \eqref{Sobolev-type} becomes a family of one-dimensional weighted inequalities, where the associated weight is just $ \psi(r)^{N-1} $. Then, by carefully invoking Laplacian and \emph{surface-measure comparison} (see also Subsection \ref{nfrg}), one can apply the same arguments to general Cartan-Hadamard manifolds. 

\vspace{0.1cm}
For simplicity, above we have assumed $ N \ge 3 $ and that \eqref{sezionale-intro} holds for sectional curvatures. Actually, our results are also valid in the $ 2 $-dimensional case (up to excluding $ p=2^\ast=\infty $), and in fact one can replace $ \mathrm{Sect}(x) $ with $ \mathrm{Sect}_\omega(x) $, the latter symbol denoting ``radial'' sectional curvatures (we refer to Subsection \ref{bas-not}). In particular, we can give an alternative proof of McKean's Theorem under slightly weaker curvature assumptions (Theorem \ref{thm:mckean}).

\vspace{0.1cm}
A motivation to study Sobolev-type inequalities on Cartan-Hadamard manifolds under curvature bounds like \eqref{sezionale-intro} came from \cite{GMV}, where the authors investigate the asymptotic behavior of nonnegative solutions to the \emph{porous medium equation} (see the monograph \cite{V2})
\begin{equation}\label{PME}
\begin{cases}
u_t = \Delta (u^m) & \text{in } M \times \mathbb{R}^+ \, , \\ 
u=u_0 \ge 0 & \text{on } M \times \lbrace 0 \rbrace \, ,
\end{cases}
\end{equation}
where $ m>1 $. They prove that if $ \mathrm{Sect}(x) \approx r^{-\beta} $ for some $ \beta \in (0,2) $ then 
\begin{equation*}\label{asym-pme}
u(x,t) \approx t^{-\frac{1}{m-1}} \left[ \gamma \left( \log t \right)^{\frac{2+\beta}{2-\beta}} - r^{\frac{2+\beta}{2}} \right]_+^{\frac{1}{m-1}}  \qquad \text{as } t \to \infty 
\end{equation*}
for a suitable positive constant $ \gamma $, provided $ u_0 $ is compactly supported. Such bounds are proved by pure barrier methods, but they are compatible with the $ L^1 $-$ L^\infty $ \emph{smoothing effects} shown in \cite{GM16} upon \emph{assuming} that \eqref{Sobolev-type} holds with a constant $C_p$ as in \eqref{bhv-cp}. By combining these results, we conjectured the validity of such (radial) inequalities. Analogous connections can be established in the quasi-Euclidean case. For more details, we refer the reader to Section~\ref{PME-application}. 

\vspace{0.1cm}

\emph{Sobolev, Poincar\'e and Hardy-type inequalities on manifolds}. The investigation of functional inequalities on Riemannian manifolds is a very wide and active research field: it is out of sight to give a complete list of all the related literature. We limit ourselves to mentioning articles or monographs which, either by dealing with topics of a wider scope or more specific ones, are connected with the main achievements of the present paper. From a general point of view, we quote \cite{Hebey bis,Hebey}, where the discussion is devoted to the rigorous definition of \emph{Sobolev spaces} on Riemannian manifolds and the properties of the associated embeddings.

The very first result dealing with the \emph{optimal constant} of the Euclidean Sobolev inequality is due to the celebrated papers \cite{Aubin2} and \cite{Talenti}, by T.~Aubin and G.~Talenti, respectively. Then the first author continued the investigation of Sobolev-type inequalities as well as related optimality issues on Riemannian manifolds: see \cite{Aubin1} (compact manifolds with applications to the \emph{Yamabe problem}), \cite{Aubin3} (where higher-order inequalities are also considered) and \cite{Aubin Li} (estimates of the best constants of subcritical Sobolev embeddings). Some of the results of \cite{Aubin2} were later improved in \cite{HV,Hebey ter}, essentially by requiring bounds on the Ricci curvature in place of the sectional curvatures. Note that, in the light of the recent breakthrough paper \cite{GS}, we can now assert that the optimal Sobolev constant in \eqref{eq-sob-euc} on any Cartan-Hadamard manifold is in fact Euclidean (see also \cite[Section 8]{Hebey}). In any case, let us point out that here we do not investigate the exact value of optimal constants: we are only interested in the dependence of the latter w.r.t.~$p$.

In contrast to McKean's Theorem, in \cite{LZ} it was shown that, on any complete noncompact Riemannian manifold, the essential spectrum of the Laplace-Beltrami operator starts from zero as soon as the Ricci curvature vanishes at infinity. This generalizes \cite{W}, where the same thesis was established upon assuming an at-least-quadratic decay of the negative curvatures.

Functional-analytic issues on the hyperbolic space $ \mathbb{H}^N $ have drawn a lot of interest recently, the latter being in a sense the simplest example of a noncompact Riemannian manifold with negative curvatures. In this regard, we mention \cite{BGG}, where an improved version of the Poincar\'e inequality is obtained with optimal \emph{remainder terms} of Hardy type.
In \cite{Car1} the author also considered Hardy-type inequalities, for nonstandard weights satisfying suitable differential equations (with applications to Cartan-Hadamard manifolds). Finally, in \cite{Nguyen} it is established an inequality on $ \mathbb{H}^N $ that yields the optimal Sobolev and Poincar\'e constants simultaneously. 

For a survey dealing with connections between the Poincar\'e inequality, the logarithmic Sobolev inequality, measure-concentration issues and isoperimetric bounds, we refer to \cite{L04}. In \cite{BLW} the authors, starting from inequalities that interpolate between Poincar\'e and log-Sobolev, provide a method to obtain suitable \emph{weighted} inequalities of the same type. 

Most of the results we have mentioned above are focused on proving the validity of Sobolev (or Poincar\'e, Hardy) inequalities. One can also investigate \emph{topological consequences}: in \cite{L99,Car2} it is shown (under suitable curvature or volume-growth assumptions, respectively) that a Riemannian manifold supporting a Sobolev inequality with Euclidean constant is necessarily isometric to $\mathbb{R}^N$. For similar problems, but rather different methods, see also \cite{PiVe}. 

\vspace{0.1cm}
\emph{Functional inequalities and nonlinear diffusions}. The link between the Sobolev inequality (or Gagliardo-Nirenberg and Nash inequalities in low dimensions) and a sharp decay estimate for the heat kernel is a well-studied topic, which goes back to some pioneering works between the 50s and the 80s: see the monograph \cite{Dav}. In this regard, let us also mention \cite{GSC}, where Faber-Krahn inequalities on Riemannian manifolds under removal of compact subsets or gluing of noncompact manifolds are investigated, with applications to heat-kernel bounds. 

In the last decades, several results that connect the validity of functional inequalities of Sobolev, log-Sobolev or Poincar\'e type with smoothing effects for \emph{nonlinear} diffusion equations have been established: see \cite{GMP13} for weighted porous medium equations in the presence of weights and Poincar\'e inequalities, \cite{GM13,FM} for optimal short and long-time smoothing estimates for porous medium equations (or the more general \emph{filtration equation}) on Euclidean domains in the case of homogeneous Neumann conditions, and \cite{GM16} for similar analyses focused on the consequences of the validity of families of Sobolev-type inequalities of the type of \eqref{Sobolev-type}. Previous results in this direction can be found in \cite{BG05}. As a general reference on smoothing effects, we also quote \cite{V1}. It is worth pointing out that in order to prove some of the main theorems of these papers, the authors often exploit a remarkable equivalence tool between \emph{families} of Gagliardo-Nirenberg-Sobolev inequalities and a \emph{single} inequality, which is due to \cite{BCLS}. 

Among recent works that take advantage of connections between functional inequalities and \emph{fast diffusion} flows (i.e.~\eqref{PME} with $m<1$) we refer to \cite{DEL,DELM}, where the latter have been thoroughly exploited in order to prove or disprove the achievement of optimality by radial functions in Caffarelli-Kohn-Nirenberg inequalities. For a functional-analytic investigation of fast diffusions on Cartan-Hadamard manifolds, see also \cite{BGV08}. Finally, the monograph \cite{BGL} is devoted to a wide-scope discussion on the interplay between analytic, geometric and probabilistic features of Markov diffusion semigroups, which involves functional inequalities related to those treated here and curvature-dimension conditions in more general metric frameworks.

\subsection{Plan of the paper}

In Section \ref{sect:stat} we state and discuss our main results, after a brief introduction to notations. Section \ref{sect2} recalls some preliminary tools in Riemannian geometry and functional inequalities. In Section \ref{sec:radial} we carry out the proofs in the radial setting (Theorems \ref{teo} and \ref{teo2}). Section \ref{sect:nonrad} deals with the nonradial case, namely the disproof of the Sobolev-type inequalities for general nonradial functions (see Theorem \ref{Controex teo}). The last section is devoted to illustrate how the results established here entail optimal smoothing estimates for the (radial) porous medium equation flow on Cartan-Hadamard model manifolds (Theorems \ref{thm: sobo-deg} and \ref{thm: sobo-deg-quasi}). In Appendix \ref{sec-mckean} we focus on the Poincar\'e case $ p=2 $ and give an alternative proof of McKean's inequality (see Theorems \ref{thm:mckean} and \ref{mckball}). 

\section{Statements of the main results}\label{sect:stat} 

In this section, first we introduce some basic notations and definitions that will be used below, then we provide the statements of our main results (whose proofs are deferred to Sections \ref{sec:radial}--\ref{sect:nonrad})  and discuss their optimality or possible related extensions. 

\subsection{Basic notations}\label{bas-not}

Given an $ N $-dimensional Riemannian manifold $(M,g)$ and $x\in M$, we let $\mathrm{Sect}(x)$ denote the sectional curvature w.r.t.~\emph{any} $2$-dimensional tangent subspace at $ x \in {M} $ and $\mathrm{Ric}(x)$ the Ricci curvature at $ x \in {M} $, as a quadratic form on the tangent space $ T_xM $.

\vspace{0.1cm}
The symbol by $C^{1}_{c :\mathrm{rad}}(M)$ will stand for the space of all $C^1$ functions on $M$, with compact support, that are radial with respect to some (fixed) point $ o \in M $, i.e. 
\begin{equation*}
C^1_{c:\mathrm{rad}}(M) := \left\{ f \in C^1_{c}(M) : \ f(x) \equiv f( \operatorname{d}(x,o) ) \quad \forall x \in M \right\} ,
\end{equation*}
where $ r\equiv r(x):=\operatorname{d}(x,o)$ is the geodesic distance from $x$ to $o$. Furthermore, we let $d\nu$ denote the Riemannian volume measure of $M$ and set
\begin{equation*}
\norm{f}_{L^p(M)}:=\left(\int_M |f|^p\, d\nu\right)^{\frac{1}{p}}
\end{equation*}
for all $1\leq p < \infty$, while $ \norm{f}_{L^\infty(M)} $ as usual represents the essential supremum of $ |f| $. The \emph{Sobolev critical exponent} is defined in \eqref{eq-sob-euc} for $ N \ge 3 $, while for $ N=2 $ we put $ 2^\ast =\infty $. 

\vspace{0.1cm}
In many parts of the paper it will be necessary to deal with Riemannian manifolds whose metric has the special structure \eqref{model metric-intro}, where $\psi: \mathbb{R}^+ \rightarrow \mathbb{R}^+ $ is a function belonging to the class
\begin{equation}\label{def-A} 
\mathcal{A} := \left\{ \psi\in C^{\infty}((0,\infty))\cap C^{1}([0,\infty)) : \ \psi(0)=0 \, , \ \psi(r)>0 \ \, \forall r>0 \, , \ \psi^\prime(0)=1 \right\} .
\end{equation}
These objects are referred to as \emph{model manifolds}, and will be denoted by $ \mathbb{M}_\psi^N $. The Euclidean space corresponds to $ \psi(r)=r $, while the hyperbolic space corresponds to $ \psi(r)= \sinh r $. 

\vspace{0.1cm}
By an $ N $-dimensional \emph{Cartan-Hadamard} manifold $ M \equiv \mathbb{M}^N $ we mean a complete, noncompact, simply connected Riemannian manifold with everywhere nonpositive sectional curvatures. On such manifolds, the cut locus of any point $o$ is empty; hence, for every $ x \in M \setminus \lbrace o \rbrace$ we can define polar coordinates with pole at $o$, namely $r = \operatorname{d}(x,o)$ and $\theta\in\mathbb{S}^{N-1}$. We then let $B_{r}$ denote the geodesic ball of radius $r$ centered at $o$ and set $S_{r}:=\partial B_{r}$. The symbol $ \textrm{Sect}_\omega(x) $ will stand for the sectional curvature w.r.t.~to any 2-dimensional tangent subspace $ \omega $ at $x$ containing the \emph{radial} direction, and $ \mathrm{Ric}_o(x) $ for the Ricci curvature at $x$ evaluated in the radial direction. 

\subsection{The results}\label{sect:stat-1} 

If the (radial) sectional curvatures of a Cartan-Hadamard manifold do not decay too fast at infinity, i.e.~slower than a negative quadratic power of $r$, we can show that all the radial inequalities from Poincar\'e (excluded) to Sobolev hold. 

\begin{theorem}\label{teo}
Let $ \mathbb{M}^N $ be a Cartan-Hadamard manifold such that 
\begin{equation}\label{sezionale} 
\mathrm{Sect}_\omega(x) \leq - C_0 \, r^{-\beta} \qquad \forall x \in \mathbb{M}^N \setminus B_{R_0} \, ,
\end{equation}
for some $ \beta \in (0,2) $ and $C_0,R_0>0$.
Then there exists a positive constant $C$, depending only on $ N , \beta , C_0 , R_0$, such that for every $ p \in\left(2,2^\ast\right] \cap (2,\infty) $ the {radial} Sobolev-type inequality
\begin{equation}\label{tsfinale}
\left\| f \right\|_{L^p\left(\mathbb{M}^N\right)} \leq  \frac{C \, p^{\frac{2+\beta}{2(2-\beta)}}}{(p-2)^{\frac{\beta}{2-\beta}}} \left\| \nabla f \right\|_{L^2\left(\mathbb{M}^N\right)}\qquad \forall f \in C_{c:\mathrm{rad}}^1\!\left(\mathbb{M}^N\right) 
\end{equation}
holds. Moreover, the dependence on $p$ of the multiplying constant in \eqref{tsfinale} is optimal, in the sense that for each $ \beta \in (0,2) $ there exists a model manifold $ \mathbb{M}^N_\psi $, complying with \eqref{sezionale}, such that  
\begin{equation}\label{tsfinale-opt}
\inf_{f \in C_{c:\mathrm{rad}}^1\!\left(\mathbb{M}^N_\psi\right), \, f \not\equiv 0 } \frac{\left\| \nabla f \right\|_{L^2\left(\mathbb{M}^N_\psi \right)}}{\left\| f \right\|_{L^p\left(\mathbb{M}^N_\psi \right)}} \le \frac{(p-2)^{\frac{\beta}{2-\beta}}}{C \, p^{\frac{2+\beta}{2(2-\beta)}}} \qquad \forall p \in (2,2^\ast] \cap (2,\infty)
\end{equation} 
for another positive constant $ C $ depending on $ N , \beta , C_0 , R_0$.
\end{theorem}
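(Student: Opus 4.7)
The plan is to first prove \eqref{tsfinale} on a model manifold $\mathbb{M}^N_\psi$ matching the curvature bound, and then to transfer the inequality to the general Cartan-Hadamard setting via Laplacian/surface-measure comparison. Choose $\psi\in\mathcal{A}$ such that $\psi''/\psi=C_0\,r^{-\beta}$ on $[R_0,\infty)$, with any smooth compatible profile on $[0,R_0]$. On $\mathbb{M}^N_\psi$, a radial $f$ has
\begin{equation*}
\|f\|_{L^p}^p=\omega_{N-1}\int_0^\infty |f(r)|^p\,\psi^{N-1}\,dr,\qquad \|\nabla f\|_{L^2}^2=\omega_{N-1}\int_0^\infty |f'(r)|^2\,\psi^{N-1}\,dr,
\end{equation*}
so \eqref{tsfinale} becomes a one-dimensional weighted inequality. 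Since radial $f$ is free at $r=0$ but vanishes at $r=\infty$, the Muckenhoupt--Maz'ya characterization recalled in Subsection \ref{one-weigh} controls the best constant, up to an explicit H\"older prefactor of order $p^{1/2}$ coming from the Hardy-to-Sobolev step, by
\begin{equation*}
A_p:=\sup_{s>0}\left(\int_0^s \psi^{N-1}\,dr\right)^{1/p}\left(\int_s^\infty \psi^{-(N-1)}\,dr\right)^{1/2}.
\end{equation*}

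The next task is to estimate $A_p$. Sturm comparison for $\psi''/\psi\equiv C_0 r^{-\beta}$ with $\beta\in(0,2)$ yields $h(r):=\psi'(r)/\psi(r)\sim \sqrt{C_0}\,r^{-\beta/2}$ and $\psi(r)\sim\exp\!\bigl(c\,r^{(2-\beta)/2}\bigr)$ as $r\to\infty$, with $c$ explicit. L'H\^opital then gives
\begin{equation*}
\int_0^s\psi^{N-1}\,dr\sim\frac{\psi(s)^{N-1}}{(N-1)\,h(s)},\qquad \int_s^\infty\psi^{-(N-1)}\,dr\sim\frac{\psi(s)^{-(N-1)}}{(N-1)\,h(s)},
\end{equation*}
so that $A_p\asymp\sup_{s>0}\psi(s)^{(N-1)(1/p-1/2)}\,h(s)^{-1/p-1/2}$. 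Differentiating in $s$ locates an optimal radius $s^*(p)\sim(p/(p-2))^{2/(2-\beta)}$; plugging back, the exponential factor at $s^*$ balances to a constant while $h(s^*)^{-1/p-1/2}\asymp (p/(p-2))^{\beta/(2-\beta)}$ gives $A_p\asymp p^{\beta/(2-\beta)}/(p-2)^{\beta/(2-\beta)}$. Multiplying by the $p^{1/2}$ H\"older prefactor produces the announced $C(p)\lesssim p^{(2+\beta)/(2(2-\beta))}/(p-2)^{\beta/(2-\beta)}$, matching \eqref{bhv-cp}.

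To transfer the inequality to a general Cartan-Hadamard manifold $\mathbb{M}^N$ with surface area $A(r)=\mathrm{Area}(S_r)$, note that \eqref{tsfinale} for radial $f$ is the same one-dimensional weighted inequality with $\psi^{N-1}$ replaced by $A$. The curvature assumption \eqref{sezionale} and surface-measure comparison (Subsection \ref{lc}) give that $\rho(r):=A(r)/(\omega_{N-1}\psi(r)^{N-1})$ is non-decreasing, hence $\int_0^s A\le \omega_{N-1}\rho(s)\!\int_0^s\psi^{N-1}$ and $\int_s^\infty A^{-1}\le \omega_{N-1}^{-1}\rho(s)^{-1}\!\int_s^\infty\psi^{-(N-1)}$; since $1/p-1/2<0$, the $\rho(s)^{1/p-1/2}$ factor is bounded and one deduces $A_p(A)\le \omega_{N-1}^{1/p-1/2}A_p$. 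The model bound thus transfers to $\mathbb{M}^N$ with the same $p$-scaling. For the sharpness \eqref{tsfinale-opt}, I would select a specific model with $\psi''/\psi\equiv C_0 r^{-\beta}$ for $r\ge R_0$ and test against a function concentrated near $s^*(p)$: either a carefully scaled bump $f_p(r)=\varphi\bigl((r-s^*(p))/\delta_p\bigr)$ with $\delta_p\asymp h(s^*(p))^{-1}$ (so that the weight is essentially constant on its support), or the approximate Euler--Lagrange extremizer of the variational problem defining $C(p)$, the latter being needed to capture also the $p^{1/2}$ H\"older factor.

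The main obstacle is the asymptotic analysis above: extracting the \emph{sharp} $p$-dependence from the Muckenhoupt quantity requires more than the leading stretched-exponential behaviour of $\psi$, namely tight two-sided asymptotics including polynomial corrections, plus careful book-keeping of the $p^{1/2}$ H\"older prefactor in the Hardy-to-Sobolev upgrade. A secondary technicality is the singularity of $\psi^{-(N-1)}$ at $r=0$, which must be handled by restricting the supremum to $s$ bounded away from zero or by exploiting the $L^\infty$ boundedness of radial $f$ near the pole.
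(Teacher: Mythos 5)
Your overall strategy matches the paper's: reduce the radial inequality to a one-dimensional weighted Hardy-type inequality, control the best constant via the Muckenhoupt--Opic--Kufner quantity $\mathcal{B}(w,p)$ from Proposition \ref{thm:ko}, and use Laplacian/surface-measure comparison to pass from models to general Cartan--Hadamard manifolds. One organizational difference: you argue ``model first, then transfer'' via the monotonicity of the ratio $\rho(r)=\mathrm{meas}(S_r)/(\omega_{N-1}\psi(r)^{N-1})$, whereas the paper works directly with the surface-measure profile $\psi_\star(r)=(\mathrm{meas}(S_r)/\omega_{N-1})^{1/(N-1)}$, shows $\psi_\star\in\mathcal{A}$ obeys the differential inequality $\psi_\star'/\psi_\star\ge c\,r^{-\beta/2}$ through Lemma \ref{lem-GMV}, and then applies Lemma \ref{lem-psi} once. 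Your transfer step is correct (the needed monotonicity is precisely what Laplacian comparison \eqref{comp-sect-2} integrated over $\mathbb{S}^{N-1}$ gives), but it duplicates the comparison argument and costs an extra step; the $\psi_\star$ formulation is more economical.

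There is, however, a genuine gap in the quantitative part. You assert that $\mathcal{B}(w,p)\asymp\sup_s \psi(s)^{(N-1)(1/p-1/2)}h(s)^{-1/p-1/2}\asymp (p/(p-2))^{\beta/(2-\beta)}$ via L'H\^opital, and that the missing $p^{1/2}$ comes from ``an explicit H\"older prefactor of order $p^{1/2}$ coming from the Hardy-to-Sobolev step''. Both halves of this are off. First, the Opic--Kufner prefactor $(1+p/2)^{1/p}(1+2/p)^{1/2}$ in \eqref{sob-one-2} is \emph{bounded uniformly} for $p\ge 2$ (it tends to $1$ as $p\to\infty$ and equals $2$ at $p=2$); it cannot supply a $p^{1/2}$. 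Second, your L'H\^opital asymptotics for the two integrals are valid only as $s\to\infty$, and the critical radius $s^*(p)\sim((p+2)/(p-2))^{2/(2-\beta)}$ that you locate stays \emph{bounded} as $p\to\infty$. In that regime the L'H\^opital approximation is no longer sharp (indeed, plugging it into the exact critical-point identity forces $p/2\approx 1$), so the quantity you call $A_p$ would come out bounded as $p\to\infty$, missing the genuine $p^{1/2}$ growth that is necessary for the $N=2$ case where $p$ ranges over $(2,\infty)$. The paper avoids this by working with the exact critical-point identity \eqref{id-q-crit}: at the maximizer $\overline{r}$ of $Q$, one has $Q(\overline{r})=(p/2)^{1/2}\,\psi(\overline{r})^{-(N-1)}\big(\int_0^{\overline{r}}\psi^{N-1}\big)^{(p+2)/(2p)}$, so the $\sqrt{p/2}$ factor sits \emph{inside} $\mathcal{B}(w,p)$ and not in the Opic--Kufner prefactor; the remaining supremum is then controlled by the integration-by-parts estimate \eqref{est-psi-parts}, which produces the $(p-2)^{-\beta/(2-\beta)}$ singularity. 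You acknowledge at the end that the asymptotics ``require more than the leading stretched-exponential behaviour''; the concrete missing step is exactly this exact-identity computation replacing the L'H\^opital heuristic, which is needed to get the correct $p$-dependence in both the $p\downarrow 2$ and $p\uparrow\infty$ regimes. Your optimality sketch (concentrated test functions near $s^*(p)$) is a plausible alternative to the paper's approach of lower-bounding $\mathcal{B}(w,p)$ directly via \eqref{sob-one-2}, but would need the same corrected asymptotics to capture the $p^{1/2}$ factor for $N=2$.
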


If the curvatures decay with a rate which is {at most} quadratic, we still have radial Sobolev-type inequalities; however, they start to hold from a certain exponent {strictly larger than} $2$. 
\begin{theorem}\label{teo2}
Let $ \mathbb{M}^N $ be a Cartan-Hadamard manifold such that 
\begin{equation}\label{sezionale2} 
\mathrm{Sect}_\omega(x) \leq - C_1 \, r^{-2} \qquad \forall x \in \mathbb{M}^N \setminus B_{R_0} \, ,
\end{equation}
for some $ C_1 ,R_0>0$.
Then there exists a positive constant $C$, depending only on $ N, C_1 , R_0$, such that for every $ p \in \left[\tilde{2},2^\ast\right] \cap \left[ \tilde{2} , \infty \right) $ the {radial} Sobolev-type inequality
\begin{equation}\label{tsfinale2}
\left\| f \right\|_{L^p\left(\mathbb{M}^N\right)} \leq  C \, \sqrt{p} \left\| \nabla f \right\|_{L^2\left(\mathbb{M}^N\right)} \qquad \forall f \in C_{c:\mathrm{rad}}^1\!\left(\mathbb{M}^N\right) 
\end{equation}
holds, where 
\begin{equation}\label{tsfinaleX}
\tilde{2} := \frac{2\tilde{N}}{\tilde{N}-2} \, , \qquad \tilde{N} := \frac{N+1+\sqrt{1+4C_1}\,(N-1)}{2} \, .
\end{equation}
Moreover, the result is optimal w.r.t.~to $p$, in the sense that for each $ C_1>0 $ there exists a model manifold $ \mathbb{M}^N_\psi $, complying with \eqref{sezionale2}, such that \eqref{tsfinale2} fails for all $ p < \tilde{2}  $ and, in the case $ N=2 $, the best constant in \eqref{tsfinale2} behaves like $ \sqrt{p} $ (up to multiplicative constants) as $ p \to \infty $. 
\end{theorem}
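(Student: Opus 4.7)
The plan is to reduce the inequality, via the Laplacian and surface-measure comparison tools recalled in Section~\ref{sect2}, to a one-dimensional weighted inequality on a suitable model manifold $\mathbb{M}^N_\psi$. Jacobi-field comparison applied to~\eqref{sezionale2} selects the $\psi\in\mathcal{A}$ asymptotic to the explicit solution of $\psi''(r)=C_1\,r^{-2}\,\psi(r)$, namely $\psi(r)\sim r^\alpha$ as $r\to\infty$ with $\alpha=(1+\sqrt{1+4C_1})/2$; since $\tilde{N}-1=\alpha(N-1)$, the weight $w(r):=\psi(r)^{N-1}$ behaves like $r^{\tilde{N}-1}$ at infinity and like $r^{N-1}$ near the origin. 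For radial $f$, both sides of~\eqref{tsfinale2} become (up to dimensional constants) one-dimensional integrals against $w\,dr$, and the task reduces to establishing
\[
\left(\int_0^\infty |f|^p\,w\,dr\right)^{\!1/p}\le C\sqrt{p}\left(\int_0^\infty |f'|^2\,w\,dr\right)^{\!1/2}.
\]

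I would then split the integral into the outer region $[R_0,\infty)$ and the inner region $[0,R_0]$. On the outer region, since $f$ is compactly supported in $M$, Cauchy--Schwarz yields the Hardy-type pointwise bound
\[
|f(r)|\le\left(\int_r^\infty w(s)^{-1}\,ds\right)^{\!1/2}\left(\int_r^\infty |f'|^2\,w\,ds\right)^{\!1/2}\le C\,r^{(2-\tilde{N})/2}\,\|\nabla f\|_{L^2(\mathbb{M}^N)},
\]
well-defined precisely because $\tilde{N}>2$ (equivalent to $C_1>0$). Integrating $|f|^p\,w$ on $[R_0,\infty)$ converges iff $p>\tilde{2}$, with a constant that stays bounded as $p\to\infty$. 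For the borderline value $p=\tilde{2}$ I would argue separately, using that on $[R_0,\infty)$ the measure $w\,dr$ is, up to a constant, the radial part of the Lebesgue measure in the effective dimension $\tilde{N}$, so the standard radial Sobolev inequality in $\mathbb{R}^{\tilde{N}}$ supplies the critical $L^{\tilde{2}}$ bound.

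On the inner region $[0,R_0]$ the weight is essentially Euclidean of dimension $N$. When $N\ge 3$, the Euclidean Sobolev inequality~\eqref{eq-sob-euc} combined with H\"older on $B_{R_0}$ yields $\|f\|_{L^p(B_{R_0})}\le C\|\nabla f\|_{L^2(\mathbb{M}^N)}$ for every $p\in[\tilde{2},2^*]$, with a constant bounded in $p$ and thus a fortiori $\le C\sqrt{p}$. When $N=2$ the Euclidean embedding degenerates at $p=\infty$ and must be replaced by a Moser--Trudinger-type inequality: via a smooth cutoff $\chi$ supported in $B_{2R_0}$ and equal to $1$ on $B_{R_0}$, I would obtain $\|f\|_{L^p(B_{R_0})}\le C\sqrt{p}\,\|\nabla(\chi f)\|_{L^2}$ for every $p\ge 2$ and absorb the cross term $\|f\,\nabla\chi\|_{L^2}$ through the outer estimate on the annulus $B_{2R_0}\setminus B_{R_0}$. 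Summing the inner and outer contributions gives~\eqref{tsfinale2}, with the $\sqrt{p}$ dependence active precisely when $N=2$.

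For the optimality part I would work on the explicit model $\psi(r)=r^\alpha$, smoothed near $r=0$ so as to belong to $\mathcal{A}$, which complies with~\eqref{sezionale2} with equality in the asymptotic sense. To show that $p\ge\tilde{2}$ is sharp, I would use the Talenti-type concentration sequence $f_\epsilon(r)=\phi(r)\,r^{-(\tilde{N}-2)/2}$ cut off on an annulus $[R,R/\epsilon]$: a direct calculation shows that for every $p<\tilde{2}$ the quotient $\|f_\epsilon\|_{L^p}/\|\nabla f_\epsilon\|_{L^2}$ diverges as $\epsilon\to 0^+$, the numerator growing polynomially in $1/\epsilon$ while the denominator only grows like $\sqrt{\log(1/\epsilon)}$. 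For the $\sqrt{p}$ sharpness as $p\to\infty$ in the $N=2$ regime, the classical Moser sequence $f_\epsilon(r)=\min(\log(R/r),\log(R/\epsilon))/\sqrt{\log(R/\epsilon)}$ supported in the Euclidean inner region achieves $\|\nabla f_\epsilon\|_{L^2}\sim 1$ and $\|f_\epsilon\|_{L^p}\sim\sqrt{p}$ via a standard Laplace-type asymptotic. The main obstacle I foresee is the critical exponent $p=\tilde{2}$, where the Hardy argument degenerates logarithmically and one must invoke a genuine critical Sobolev embedding in the effective dimension $\tilde{N}$, together with a careful cutoff analysis in order not to lose the $\sqrt{p}$ factor in the two-dimensional case.
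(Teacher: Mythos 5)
Your reduction to a one-dimensional weighted inequality is the right first step, but the subsequent argument has a genuine gap centred on which weight you are allowed to use. For a radial $f$ on a general Cartan--Hadamard manifold $\mathbb{M}^N$, both sides of \eqref{tsfinale2} are integrals against the surface measure $\meas(S_r)\,dr$; that is, the relevant one-dimensional weight is $w(r)=\psi_\star(r)^{N-1}$ with $\psi_\star$ as in \eqref{psi-ast}, \emph{not} the comparison model function $\psi$. Laplacian comparison gives only the one-sided bound $\psi_\star'/\psi_\star \geq \psi'/\psi$, hence $w(r)\gtrsim r^{\tilde N - 1}$, but no upper bound. Your claim that ``the weight $w(r)$ behaves like $r^{\tilde N-1}$ at infinity'' is therefore false in the generality required: hyperbolic space $\mathbb{H}^N$ satisfies \eqref{sezionale2} for $R_0\ge\sqrt{C_1}$ and yet $\meas(S_r)\sim c\, e^{(N-1)r}$. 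Since the weight appears on both sides of the one-dimensional inequality, you cannot simply replace it by a smaller model weight.

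The concrete consequence is that your outer-region Hardy argument does not close. You estimate $\bigl(\int_r^\infty w^{-1}\,ds\bigr)^{1/2}\lesssim r^{(2-\tilde N)/2}$ (true, from the lower bound on $w$), but then you must integrate $r^{p(2-\tilde N)/2}\,w(r)$ over $[R_0,\infty)$, and $w$ is not controlled from above: in the hyperbolic example this integral diverges for every $p$. The estimate actually needed is the self-referential one $\int_r^\infty w^{-1}\,ds \le C\,r/w(r)$, which the paper derives from the differential inequality $\psi_\star'/\psi_\star\ge c/r - c'/r^q$ via integration by parts, see \eqref{est-psi-part3} inside the proof of Lemma~\ref{lem-psi2}. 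With that, one has $\bigl(\int_r^\infty w^{-1}\,ds\bigr)^{p/2}w(r)\lesssim r^{p/2}w(r)^{1-p/2}$, and only \emph{then} does the lower bound $w\gtrsim r^{\tilde N-1}$ (since $1-p/2<0$) give a convergent integral exactly when $p>\tilde 2$. The same issue contaminates your borderline $p=\tilde 2$ step and the neighbourhood of $\tilde 2$: the ``effective-dimension $\tilde N$'' critical Sobolev inequality you invoke holds for the power weight $r^{\tilde N -1}$, not for a general $w$ known only to dominate it. The paper avoids all of this by verifying the Kufner--Opic criterion of Proposition~\ref{thm:ko} directly for $\psi_\star$ via Lemma~\ref{lem-psi2}, a single supremum estimate that covers the full range $p\in[\tilde 2, 2^*]$ uniformly and delivers the $\sqrt p$ factor. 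Your optimality sketch (Talenti-type concentration at infinity for $p<\tilde 2$, Moser sequence for $N=2$ and $p\to\infty$) is sound and is essentially equivalent to the paper's.
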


Finally, we establish the failure of the above results for general nonradial functions, as soon as the Ricci curvature vanishes uniformly at infinity.
\begin{theorem}\label{Controex teo}
Let $\mathbb{M}^N$ be a Cartan-Hadamard manifold such that 
\begin{equation}\label{ricci-chch tesi}
\lim_{r \to \infty} \mathrm{Ric}(x) = 0  \, .
\end{equation}
Let $ p \in [2, 2^\ast) $. Then there exists no positive constant $C$ such that $ \mathbb{M}^N $ supports the Sobolev-type inequality
\begin{equation}\label{sob-one-nnrad proof}
\left\| f \right\|_{L^p\left(\mathbb{M}^N\right)} \leq  C \left\| \nabla f \right\|_{L^2\left(\mathbb{M}^N\right)}  \qquad \forall f \in C_c^1( \mathbb{M}^N ) \, .
\end{equation}
\end{theorem}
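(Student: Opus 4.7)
The plan is to construct a sequence of nonradial, compactly supported $C^{1}$ functions $\phi_{n}$ for which the ratio $\|\phi_{n}\|_{L^{p}(\mathbb{M}^N)}/\|\nabla\phi_{n}\|_{L^{2}(\mathbb{M}^N)}$ diverges, thereby ruling out (\ref{sob-one-nnrad proof}). The guiding heuristic comes from Euclidean space: the rescalings $\phi(\cdot/\lambda)$, $\lambda\to\infty$, show that no inequality of the form $\|f\|_{L^{p}}\le C\|\nabla f\|_{L^{2}}$ can hold on $\mathbb{R}^{N}$ when $p<2^{*}$. Hypothesis (\ref{ricci-chch tesi}) will be exploited to recover this scaling obstruction on large balls placed far from the pole $o$, a region where, thanks to Bishop-Gromov, $\mathbb{M}^N$ becomes asymptotically Euclidean at the level of volume comparison.

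By (\ref{ricci-chch tesi}), for every $\varepsilon>0$ there is $R_{\varepsilon}>0$ such that $\mathrm{Ric}(x)\ge-(N-1)\varepsilon^{2}g$ whenever $r(x)\ge R_{\varepsilon}$. I would fix $\varepsilon_{n}\downarrow0$, set $r_{n}:=1/\varepsilon_{n}\to\infty$, and pick points $x_{n}\in\mathbb{M}^N$ with $d(x_{n},o)=R_{\varepsilon_{n}}+r_{n}$, so that the Ricci lower bound holds throughout $B(x_{n},r_{n})$. The Cartan-Hadamard assumption provides, via the Bishop (G\"unther) inequality, the Euclidean volume lower bound
\begin{equation*}
\mathrm{Vol}\!\left(B(x_{n},r_{n}/2)\right)\ge\omega_{N}\left(r_{n}/2\right)^{N},
\end{equation*}
whereas Bishop-Gromov applied on $B(x_{n},r_{n})$ with the above Ricci bound yields
\begin{equation*}
\mathrm{Vol}\!\left(B(x_{n},r_{n})\right)\le\omega_{N-1}\int_{0}^{r_{n}}\!\left(\frac{\sinh(\varepsilon_{n}s)}{\varepsilon_{n}}\right)^{\!N-1}ds\le K\,r_{n}^{N},
\end{equation*}
with $K$ depending only on $N$, since $\varepsilon_{n}r_{n}=1$ keeps $\sinh(\varepsilon_{n}s)/(\varepsilon_{n}s)$ uniformly bounded on $[0,r_{n}]$.

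Then I would take a smooth cut-off $\chi\in C^{\infty}([0,\infty))$ with $\chi\equiv1$ on $[0,1/2]$, $\chi\equiv0$ on $[1,\infty)$, $|\chi'|\le4$, and set $\phi_{n}(y):=\chi(d(y,x_{n})/r_{n})$. Since $\mathbb{M}^N$ is Cartan-Hadamard, $\exp_{x_{n}}$ is a global diffeomorphism and $d(\cdot,x_{n})$ is smooth on $\mathbb{M}^N\setminus\{x_{n}\}$; because $\phi_{n}$ is constant in a neighbourhood of $x_{n}$ and supported in $B(x_{n},r_{n})$, it belongs to $C^{\infty}_{c}(\mathbb{M}^N)$, and $|\nabla\phi_{n}|\le 4/r_{n}$ everywhere by the $1$-Lipschitz character of the distance. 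Combining the two volume estimates with the fact that $\phi_{n}\equiv1$ on $B(x_{n},r_{n}/2)$ produces positive constants $c,C$ independent of $n$ such that
\begin{equation*}
\|\phi_{n}\|_{L^{p}(\mathbb{M}^N)}\ge c\,r_{n}^{N/p},\qquad \|\nabla\phi_{n}\|_{L^{2}(\mathbb{M}^N)}\le C\,r_{n}^{(N-2)/2},
\end{equation*}
whence $\|\phi_{n}\|_{L^{p}(\mathbb{M}^N)}/\|\nabla\phi_{n}\|_{L^{2}(\mathbb{M}^N)}\ge c'\,r_{n}^{\frac{N}{p}-\frac{N-2}{2}}$. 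The exponent is strictly positive exactly when $p<2^{*}$ (the case $N=2$ being automatic, since then $(N-2)/2=0$), so the ratio diverges along $n\to\infty$ and contradicts (\ref{sob-one-nnrad proof}).

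The main subtlety lies in the calibration between $r_{n}$ and $\varepsilon_{n}$: one needs $r_{n}\to\infty$ to trigger the Euclidean scaling that defeats subcritical Sobolev inequalities, yet simultaneously $\varepsilon_{n}r_{n}$ must stay bounded in order to extract a uniform Euclidean-type volume upper bound from Bishop-Gromov; the choice $\varepsilon_{n}r_{n}=1$ balances the two requirements. Placing the centres $x_{n}$ at distance $R_{\varepsilon_{n}}+r_{n}$ from $o$ (rather than on a fixed ray through $o$) is precisely what keeps the $\phi_{n}$ genuinely nonradial, as is needed to witness the failure of (\ref{sob-one-nnrad proof}) in light of the positive results in Theorems~\ref{teo} and~\ref{teo2}.
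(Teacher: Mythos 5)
Your proof is correct and follows essentially the same strategy as the paper's: use the pinching $\mathrm{Ric}\ge-(N-1)\varepsilon^{2}$ far from the pole to place a ball of radius $\approx1/\varepsilon$ in the region where the bound holds, pick a test function concentrated there, estimate its $L^{p}$ norm from below via the Euclidean (G\"unther/Cartan-Hadamard) lower volume bound and its gradient $L^{2}$ norm from above via Bishop-Gromov, and observe that the resulting ratio diverges like $r_{n}^{N(1/p-1/2^{*})}$. The only cosmetic differences are that the paper uses the Lipschitz hat function $f_{R}=(1-d(\cdot,o_{R})/G(R))_{+}$ rather than a smooth cut-off (which the paper notes is immaterial by density), and parametrizes the construction through a single scale function $G(R)$ rather than the pair $(\varepsilon_{n},r_{n})$ with the constraint $\varepsilon_{n}r_{n}=1$; these are equivalent.
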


Note that \eqref{ricci-chch tesi}, in the special Cartan-Hadamard framework, coincides with the $ \liminf $ condition required in \cite[Theorem 1]{LZ}, where the case $ p=2 $ was treated on general manifolds.

\subsection{Optimality and possible extensions}\label{ext}

In the sequel we collect a series of observations regarding the assumptions, the (possibly wider) scope and the consequences of our results.

\vspace{0.1cm}

\noindent\emph{Ricci bounds from above}. The thesis of Theorem \ref{teo} still holds if one replaces $ \mathrm{Sect}_\omega(x) $ with $ \mathrm{Ric}_o(x) $ in \eqref{sezionale}: this is due to the fact that Laplacian-comparison  with model manifolds, which we exploit extensively throughout the paper, can also be established under such a weaker hypothesis. The argument applies to Theorem \ref{teo2} as well, except that in this case the analogue of the exponent $ \tilde{2} $ in \eqref{tsfinaleX} is no more optimal, since one has \eqref{lap-weak} in place of \eqref{comp-sect-2}. The same holds for Theorems \ref{thm:mckean} (no optimal constant however) and \ref{mckball} in the appendix.
	
\vspace{0.1cm}	

\noindent\emph{Multiplicative constants}. We stress that the constants $C$ appearing in the Sobolev-type inequalities \eqref{tsfinale} and \eqref{tsfinale2} do not depend on the manifold $ \mathbb{M}^N $, but only on the spatial dimension and the curvature bounds. This suggests that perhaps analogous inequalities may hold on rougher geometric frameworks (e.g.~allowing the curvature to be $ -\infty $ somewhere). 

\vspace{0.1cm}	

\noindent\emph{Radiality and optimality of the powers}.
	Theorem \ref{Controex teo} implies that the conclusions of Theorems \ref{teo} and \ref{teo2} {cannot} hold, in general, for nonradial functions: indeed, for each $ \beta \in (0,2] $, it is enough to consider a Cartan-Hadamard manifold (e.g.~a model) satisfying
	\begin{equation}\label{sect-asymp}
	\mathrm{Sect}(x) \sim r^{-\beta} \qquad \text{as } r \to \infty \, .
	\end{equation}
	For similar reasons, we do not treat the case $ \beta>2 $: by following a strategy that goes along the lines of the proof of optimality in Theorem \ref{teo2}, it is not difficult to check that for each $ \beta > 2 $ one can construct a model manifold complying with \eqref{sect-asymp} for which all the radial inequalities in \eqref{tsfinale} fail if $ p<2^\ast $ (we omit details and refer to \cite[Subsection 2.3, Type IV]{GMV}). Hence, no inequality of the type of \eqref{tsfinale} other than the classical Sobolev one can be valid.

\vspace{0.1cm}	

\noindent\emph{Extension  to nonsmooth functions}.
	For simplicity, we have stated Theorems \ref{teo} and \ref{teo2} (as well as Theorems \ref{mck-orig}, \ref{thm:mckean} and \ref{mckball} below) for functions in $ C^1_{c}(\mathbb{M}^N) $. Nevertheless, by means of standard density arguments, it is plain that they also hold for compactly-supported {Lipschitz} functions or, more in general, for all functions belonging to the closure of $ C^1_{c}(\mathbb{M}^N) $ (or its radial counterpart) w.r.t.~the $ L^2 $ norm of the gradient.

\vspace{0.1cm}	

\noindent\emph{On the Cartan-Hadamard assumption}.
	It is worth pointing out that, in general, we cannot drop the assumption that $ M $ is Cartan-Hadamard, i.e.~it is not enough to merely require that $ M $ has a pole and the sectional curvatures satisfy \eqref{sezionale} or \eqref{sezionale2}. Indeed, let us consider a model manifold $ \mathbb{M}^N_\psi $ with $\psi(r) = e^{-r^\alpha}$ for large $r$, where $\alpha = \beta/2 \in (0,1) $. It is straightforward to check that such a manifold complies with \eqref{sezionale}; however, it is apparent that all the inequalities in \eqref{tsfinale} fail, since the supremum appearing in \eqref{sob-one-1}  is identically $ \infty $ because of the second integral. Similarly, regarding \eqref{sezionale2}, one can consider a model manifold $ \mathbb{M}^N_\psi $ with $ \psi(r) = r^{-q_2}$ for large $r$, where $ q_2 > 0 $ is the same power as in \eqref{ee55}. 

\vspace{0.1cm}	

\noindent\emph{The case $ p \in [1,2) $}.
	From the very beginning we have assumed that $ p \ge 2 $. In fact there is a simple reason for such a restriction: it was proved in \cite[Theorem 4.6]{GM16} that on {any} Cartan-Hadamard manifold inequality \eqref{Sobolev-type} necessarily fails as soon as $ p $ is strictly smaller than $2$. Moreover, since the argument used in the corresponding proof only relies on radial functions, the inequality is false even if restricted to $ C^1_{c:\mathrm{rad}}(\mathbb{M}^N) $.
	
\section{Geometric and functional preliminaries}\label{sect2} 

In the following, we recall some basic facts in Riemannian geometry concerning volume and surface measure and Laplacian comparison (Subsections \ref{nfrg}--\ref{lc}), along with a key result related to weighted one-dimensional Sobolev-type inequalities (Subsection \ref{one-weigh}).

\subsection{Volume, surface and Laplacian}\label{nfrg} 
We adopt the same notations as in Subsection \ref{bas-not}. If $ \mathbb{M}^N $ is a Cartan-Hadamard manifold, then the surface measure of geodesic spheres reads
\begin{equation}\label{def-meas}
\meas(S_{r})=\int_{\mathbb{S}^{N-1}}A(r,\theta)\, d\theta \, , \qquad \text{where } \, d\theta := d \theta_1 \ldots d\theta_{N-1} 
\end{equation}
and $ A(r,\theta) $ is the weight associated with the volume measure of $ \mathbb{M}^N $ w.r.t.~polar coordinates, which turns out to be the square root of the determinant of the metric matrix written in such coordinates (see e.g.~\cite[Section 3]{Grigor'yan}). In particular, the latter is a nonnegative $ C^\infty(\mathbb{R}^+ \times \mathbb{S}^{N-1}) $ function. 
Hence, if $ f \in L^p(M) $ we have 
\begin{equation}\label{LP}
\begin{aligned}
\norm{f}_{L^p(M)} =\left(\int_M |f|^p\, d\nu\right)^{\frac{1}{p}} = & \left(\int_0^\infty\int_{\mathbb{S}^{N-1}}|f(r,\theta)|^p \, A(r,\theta) \, d\theta \, dr\right)^{\frac{1}{p}} \\
& \! \! \! \! \! \! \! \! \! \! \! \! \! \! \! \! \overset{\text{if $f$ is radial}}{=} \left(\int_0^\infty |f(r)|^p \, \meas(S_r) \, dr\right)^{\frac{1}{p}} ,
\end{aligned}
\end{equation}
so that radial Sobolev-type inequalities can be rewritten as one-dimensional weighted inequalities (see Subsection \ref{one-weigh}). The Laplace-Beltrami (or Laplacian) operator on $ \mathbb{M}^N $ reads
\begin{equation*}
\Delta=\dfrac{\partial^2}{\partial r^2} + \mathsf{m}(r,\theta) \, \dfrac{\partial}{\partial r}+\Delta_{S_{r}} \, ,
\end{equation*}
where $\Delta_{S_{r}}$ is the Laplace-Beltrami operator on the submanifold $S_{r}$ and 
\begin{equation}\label{lap-m}
\mathsf{m}(r,\theta) = \dfrac{\partial}{\partial r} \left( \log A(r,\theta) \right) \qquad \forall x \equiv (r,\theta) \in \mathbb{R}^+ \times \left( \mathbb{S}^{N-1} \setminus \mathcal{P} \right) ,
\end{equation} 
where $ \mathcal{P} $ is the (negligible) set of angles $ \theta \equiv (\theta_1,\ldots,\theta_{N-1}) \in \mathbb{S}^{N-1} $ at which $ A(r,\theta) $ vanishes identically. Note that $ \mathsf{m}(r,\theta) $ is precisely the \emph{Laplacian of the distance function} $ x \equiv (r,\theta) \mapsto r$. By integrating \eqref{lap-m} from a fixed $r_0>0$ to $ r > r_0 $ we deduce that 
\begin{equation*}\label{eq:int-m}
\int_{r_{0}}^r \mathsf{m}(s,\theta)\, ds=\log A(r,\theta) - \log A(r_0,\theta) \qquad \Longrightarrow \qquad  A(r,\theta)=e^{\int_{r_0}^r \mathsf{m}(s,\theta)\, ds +c_{\theta}}  \, ,
\end{equation*}
with $c_{\theta} := \log A(r_0,\theta) $. As a result, recalling \eqref{def-meas}, we can write
\begin{equation}\label{meas}
\meas(S_{r})=\int_{\mathbb{S}^{N-1}}e^{\int_{r_0}^r \mathsf{m}(s,\theta)\, ds+c_{\theta}}\, d\theta \, .
\end{equation}

\subsection{Laplacian-comparison theorems}\label{lc} 
Classical results allow one to compare the Laplacian (and the Hessian in some cases) of the distance function of a Cartan-Hadamard manifold with the Laplacian of the distance function of a suitable model manifold achieving equality in the curvature bounds (as a reference see e.g.~\cite[Section 2]{GreeneWu} or \cite[Section 15]{Grigor'yan}). More precisely, if 
\begin{equation}\label{comp-sect}
\mathrm{Sect}_\omega(x)\leq - \dfrac{\psi''(r)}{\psi(r)} \qquad  \forall x \equiv(r,\theta) \in M \setminus\lbrace o\rbrace
\end{equation}
for some function $\psi\in\mathcal{A}$, then
\begin{equation}\label{comp-sect-2}
\mathsf{m}(r,\theta) \geq (N-1) \, \dfrac{\psi'(r)}{\psi(r)} \qquad \forall (r,\theta) \in \mathbb{R}^+ \times \mathbb{S}^{N-1} \, .
\end{equation}
Similarly, if 
\begin{equation*}
\mathrm{Ric}_o(x) \geq- (N-1) \, \dfrac{\psi''(r)}{\psi(r)} \qquad \forall x \equiv (r,\theta) \in M \setminus \lbrace o \rbrace 
\end{equation*}
for another function $\psi\in\mathcal{A}$, then
\begin{equation*}
\mathsf{m}(r,\theta)\leq (N-1) \,\dfrac{\psi'(r)}{\psi(r)} \qquad \forall (r,\theta) \in \mathbb{R}^+ \times \mathbb{S}^{N-1} \, .
\end{equation*}
Although we will only use the above inequalities in the Cartan-Hadamard setting, we point out that they are true on more general Riemannian manifolds (at least manifolds with a pole). As a simple consequence of Laplacian comparison with the Euclidean space (i.e.~\eqref{comp-sect}--\eqref{comp-sect-2} with $ \psi(r)=r $), on any Cartan-Hadamard manifold we have
\begin{equation}\label{lap-euc}
\mathsf{m}(r,\theta) \ge \frac{N-1}{r} \qquad \forall (r,\theta) \in \mathbb{R}^+ \times \mathbb{S}^{N-1} \, .
\end{equation} 
Actually, a comparison result similar to the first one can be deduced by replacing the (radial) sectional curvatures $ \mathrm{Sect}_\omega(x) $ with the radial Ricci curvature. Namely, if 
\begin{equation*}
\mathrm{Ric}_o(x)\leq - (N-1) \, \dfrac{\psi''(r)}{\psi(r)} \qquad \forall x \equiv(r,\theta) \in \mathbb{M}^N \setminus \lbrace o\rbrace 
\end{equation*}
for some function $\psi\in\mathcal{A}$, then
\begin{equation}\label{lap-weak}
\mathsf{m}(r,\theta) \geq \sqrt{N-1} \, \dfrac{\psi'\!\left(\sqrt{N-1} \, r\right)}{\psi\!\left(\sqrt{N-1} \, r\right)} \qquad \forall(r,\theta) \in \mathbb{R}^+ \times \mathbb{S}^{N-1} \, .
\end{equation}
This is basically due to the fact that the \emph{Hessian} of the distance function on $ \mathbb{M}^N $ on a Cartan-Hadamard manifold has nonnegative eigenvalues: we refer to \cite[Theorem 2.15]{X}.

By exploiting Laplacian comparison with model functions $ \psi \in \mathcal{A} $ carefully chosen, one can prove the following (for the details see e.g.~\cite[Lemma 4.1]{GMV}). 
\begin{lemma}\label{lem-GMV}
Let $ \mathbb{M}^N $ be a Cartan-Hadamard manifold satisfying \eqref{sezionale} for some $ \beta \in [0,2) $ and $C_0,R_0>0$. Then there exist $ r_0=r_0(\beta,C_0,R_0)>0 $ and $ c=c(N,\beta,C_0,R_0)>0 $ such that
\begin{equation*}\label{m-1}
\mathsf{m}(r,\theta) \ge c \, r^{-\frac{\beta}{2}} \qquad \forall (r,\theta) \in [r_0,\infty) \times \mathbb{S}^{N-1} \, .
\end{equation*} 
\end{lemma}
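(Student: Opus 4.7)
My plan is to obtain the estimate via Laplacian comparison \eqref{comp-sect}--\eqref{comp-sect-2} with a one-dimensional model function $\psi$ designed to encode the curvature decay rate $-C_0 r^{-\beta}$. Concretely, I would take $\psi(r)=r$ on $[0,R_0]$ and extend $\psi$ on $(R_0,\infty)$ as the solution of the Cauchy problem
\[
\psi''(r) = C_0 \, r^{-\beta}\,\psi(r), \qquad \psi(R_0)=R_0, \quad \psi'(R_0)=1.
\]
Up to a mild mollification at $r=R_0$ so that the result lies in $\mathcal{A}$, this produces a function with $-\psi''/\psi$ equal to $-C_0 r^{-\beta}$ on $(R_0,\infty)$ and identically $0$ on $[0,R_0]$. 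Combining hypothesis \eqref{sezionale} for $r\ge R_0$ with the Cartan-Hadamard bound $\mathrm{Sect}_\omega\le 0$ for $r<R_0$, the hypothesis $\mathrm{Sect}_\omega\le -\psi''/\psi$ is satisfied pointwise, so \eqref{comp-sect-2} gives $\mathsf{m}(r,\theta)\ge(N-1)\,\psi'(r)/\psi(r)$ for every $(r,\theta)\in\mathbb{R}^{+}\times\mathbb{S}^{N-1}$.

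The content of the lemma reduces to the asymptotic lower bound $\psi'(r)/\psi(r)\gtrsim r^{-\beta/2}$ for large $r$. Since $\psi''>0$ and $\psi'(R_0)=1$ on $(R_0,\infty)$, $\psi'$ is increasing, hence $\psi(r)\ge r$ on that interval. Multiplying the ODE by $\psi'$ and integrating from $R_0$ to $r$, then integrating by parts on the right-hand side using $\psi\psi'=\tfrac12(\psi^2)'$ and the fact that $s\mapsto s^{-\beta}$ is nonincreasing (so the by-parts correction has a favourable sign), one arrives at the clean inequality
\[
(\psi'(r))^2 \;\ge\; 1 \,+\, C_0\, r^{-\beta}\,\psi(r)^2 \,-\, C_0\, R_0^{2-\beta}.
\]
Since $\beta<2$ and $\psi(r)\ge r$, the main term satisfies $C_0 r^{-\beta}\psi(r)^2\ge C_0 r^{2-\beta}\to\infty$, so the constant deficit is absorbed for every $r$ larger than some explicit threshold $r_0$ depending only on $\beta,C_0,R_0$, yielding
\[
\frac{\psi'(r)}{\psi(r)}\;\ge\;\sqrt{\frac{C_0}{2}}\, r^{-\beta/2}\qquad \forall r\ge r_0.
\]
Chaining with the Laplacian comparison above gives the thesis with $c:=(N-1)\sqrt{C_0/2}$.

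I expect the only genuine obstacle to be the first step: strictly speaking, the $\psi$ built piecewise is merely $C^{1,1}$ at $R_0$, while the class $\mathcal{A}$ demands smoothness. I would dispose of this either by a standard mollification (replacing $C_0 r^{-\beta}$ by a smooth nonnegative $G(r)$ which agrees with it outside a thin neighbourhood of $R_0$, vanishes near $0$, and still satisfies $\mathrm{Sect}_\omega\le -\psi''/\psi$ throughout, then letting the regularization parameter tend to zero), or alternatively by applying Sturm-type comparison for the Riccati equation satisfied by $\mathsf{m}$ directly along each radial geodesic starting from $r=R_0$, with initial datum $\mathsf{m}(R_0,\theta)\ge (N-1)/R_0$ supplied by the Euclidean Laplacian comparison \eqref{lap-euc}. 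Both routes are routine and preserve the energy estimate above, so the stated conclusion follows in either case.
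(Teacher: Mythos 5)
Your proof is correct and follows essentially the route one expects for this lemma (and, as far as one can tell, the route of the cited reference \cite[Lemma 4.1]{GMV}): build a model function $\psi$ achieving equality in the curvature bound, pass through Laplacian comparison \eqref{comp-sect}--\eqref{comp-sect-2}, and then reduce to a one-dimensional lower bound on $\psi'/\psi$. Your energy trick (multiply the ODE $\psi''=C_0 r^{-\beta}\psi$ by $\psi'$, integrate, and integrate by parts with the favourable sign from the monotonicity of $s\mapsto s^{-\beta}$) is a clean and quantitative way to nail the lower bound
\[
(\psi'(r))^2\ \ge\ 1 + C_0\,r^{-\beta}\psi(r)^2 - C_0 R_0^{2-\beta},
\]
which, together with $\psi\ge r$ and $\beta<2$, absorbs the additive deficit for $r\ge r_0(\beta,C_0,R_0)$ and yields $\psi'/\psi\ge\sqrt{C_0/2}\,r^{-\beta/2}$. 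Chaining with \eqref{comp-sect-2} gives the thesis with $c=(N-1)\sqrt{C_0/2}$, and the constants have the claimed dependence.

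Two small points of hygiene. First, the mollification, as written, is slightly misphrased: if the smoothed curvature profile $G$ were to agree with $C_0 r^{-\beta}$ \emph{below} $R_0$ you would need $\mathrm{Sect}_\omega\le -C_0 r^{-\beta}$ there, which is not assumed; you must instead take $G\equiv 0$ on $[0,R_0]$, a smooth transition on $[R_0,R_0+\varepsilon]$ with $0\le G\le C_0 r^{-\beta}$, and $G=C_0 r^{-\beta}$ beyond, so that $\mathrm{Sect}_\omega\le -G$ follows from the Cartan--Hadamard hypothesis on $[0,R_0]$ and from \eqref{sezionale} on $[R_0,\infty)$. The energy estimate is stable in $\varepsilon$, so this changes nothing. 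Second, your alternative via a Riccati/Sturm comparison started at $r=R_0$ with $\mathsf{m}(R_0,\theta)\ge (N-1)/R_0$ is in fact the cleaner option, since it sidesteps the regularity of the piecewise model entirely and uses only data on $[R_0,\infty)$, where the ODE solution is smooth. Either way, the argument closes.
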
 

\subsection{One-dimensional weighted inequalities}\label{one-weigh}
In the following, by a \emph{weight} in $ \mathbb{R}^+ $ we simply mean any positive $ L^1_{\mathrm{loc}}([0,\infty)) $ function, even though in the rest of the paper we will in fact deal with more regular functions. The techniques we exploit in Section \ref{sec:radial}  take advantage of some known properties for \emph{one-dimensional} weighted Sobolev-type inequalities, or Hardy-type inequalities according to the terminology of the monograph \cite{Kufner}. We mainly refer to the latter, which collects several results in this direction (not only in the one-dimensional framework). 

\begin{proposition}[{\cite[Theorem 6.2]{Kufner}}]\label{thm:ko}
Let $w$ be a weight in $ \mathbb{R}^+ $. Let $ p \in [2,\infty) $. Then the Sobolev-type inequality
\begin{equation}\label{sob-one}
\left( \int_0^\infty \left| g(r) \right|^p  w(r) \, dr \right)^{\frac 1p} \le C \left( \int_0^\infty \left| g^\prime(r) \right|^2 w(r) \, dr \right)^{\frac 12}  \qquad  \forall g \in C_c^1([0,\infty))
\end{equation}
holds for some $ C>0 $ if and only if 
\begin{equation}\label{sob-one-1}
\mathcal{B}(w,p):= \sup_{r\in(0,\infty)} \left(\int_0^r w(s) \, ds \right)^{\frac{1}{p}} \left(\int_r^\infty \frac{1}{w(s)} \, ds \right)^{\frac 1 2} < \infty \, ,
\end{equation} 
and the best constant $C$ appearing in \eqref{sob-one} satisfies the two-sided bound
\begin{equation}\label{sob-one-2}
\mathcal{B}(w,p) \le C \le \left( 1+\tfrac p2 \right)^{\frac 1p} \left(  1+\tfrac 2p \right)^{\frac 12}  \, \mathcal{B}(w,p)  \, .
\end{equation}
\end{proposition}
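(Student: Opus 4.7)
The plan is to handle necessity and sufficiency separately, extracting the lower bound $\mathcal{B}(w,p)\le C$ from the necessity argument and the explicit upper bound from the sufficiency argument.

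First I would establish necessity. Fix $r>0$ and $T>r$, and consider the Lipschitz, compactly supported test function
$$
g_{r,T}(s):=\begin{cases}
1, & 0\le s\le r, \\[2pt]
\dfrac{\int_s^T w(t)^{-1}\,dt}{\int_r^T w(t)^{-1}\,dt}, & r\le s\le T, \\[2pt]
0, & s\ge T,
\end{cases}
$$
which can be approximated in the norms relevant for \eqref{sob-one} by functions in $C^1_c([0,\infty))$ via mollification, so that the inequality passes to $g_{r,T}$ in the limit. A direct computation gives
$$
\int_0^\infty |g_{r,T}|^p\, w\, ds \;\ge\; \int_0^r w(s)\,ds \qquad\text{and}\qquad \int_0^\infty |g_{r,T}'|^2\, w\, ds \;=\; \left(\int_r^T w(t)^{-1}\,dt\right)^{\!-1}\!.
$$
Inserting these into \eqref{sob-one}, letting $T\to\infty$, and taking the supremum over $r>0$ yields $\mathcal{B}(w,p)\le C$; in particular, the finiteness of $\mathcal{B}(w,p)$ is necessary.

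Next I would prove sufficiency. Assume $\mathcal{B}(w,p)<\infty$ and let $g\in C^1_c([0,\infty))$. Set $W(r):=\int_0^r w$ and $V(r):=\int_r^\infty w^{-1}$. Writing $g(r)=-\int_r^\infty g'(s)\,ds$ and applying Cauchy--Schwarz gives the pointwise bound $|g(r)|\le V(r)^{1/2}\bigl(\int_r^\infty |g'|^2 w\bigr)^{1/2}$. An integration by parts (the boundary terms vanish since $W(0)=0$ and $g$ has compact support) gives
$$
\int_0^\infty |g|^p\,w\,dr \;=\; -p\int_0^\infty |g|^{p-2}\,g\,g'(r)\,W(r)\,dr \;\le\; p\int_0^\infty |g|^{p-1}\,|g'(r)|\,W(r)\,dr.
$$
The strategy is then to combine the two displays via Hölder's inequality with exponents $p/(p-1)$ and $p$, using the constraint $W(r)^{1/p}V(r)^{1/2}\le \mathcal{B}(w,p)$ to absorb the weight factors. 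After reabsorbing one power of $\|g\|_{L^p(w)}$ into the left-hand side, one obtains \eqref{sob-one} with a multiplying constant proportional to $\mathcal{B}(w,p)$.

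The main obstacle will be to pin down the sharp factor $(1+p/2)^{1/p}(1+2/p)^{1/2}$ appearing in \eqref{sob-one-2}: a naive combination of the integration-by-parts identity, Cauchy--Schwarz and Hölder only yields a crude multiplier of order $p$. The optimal dependence requires a carefully balanced choice of Hölder exponents and, following the approach of Kufner and Opic, a dyadic decomposition of $(0,\infty)$ with respect to the level sets of $W$ with a term-by-term optimization. For the subsequent applications to Theorems \ref{teo}--\ref{teo2} a non-sharp version would already capture the correct order in $p$; for the explicit pre-factor we would rely on the detailed analysis of \cite[Theorem 6.2]{Kufner}.
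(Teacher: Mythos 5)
The paper does not supply its own proof of this proposition: it is quoted verbatim from \cite[Theorem 6.2]{Kufner} and used as a black box, so there is no internal argument to compare against. Judged on its own terms, your necessity argument is correct and standard: the piecewise test function $g_{r,T}$, its gradient energy $\bigl(\int_r^T w^{-1}\bigr)^{-1}$, and the passage $T\to\infty$ all check out and give $\mathcal{B}(w,p)\le C$ cleanly. Your sufficiency sketch identifies the right ingredients (the pointwise Cauchy--Schwarz bound $|g(r)|\le V(r)^{1/2}\|g'\|_{L^2(w)}$, integration by parts against $W'=w$), but as written the H\"older step does not close: splitting with exponents $p/(p-1)$ and $p$ produces an $L^p(W^p w^{1-p})$ norm of $g'$, not the $L^2(w)$ norm you need, and the alternative Cauchy--Schwarz split leaves you with $\int |g|^{2(p-1)}W^2/w$, which cannot be controlled by $\mathcal{B}(w,p)$ and $\|g\|_{L^p(w)}$ alone without the finer Tomaselli/Muckenhoupt-type decomposition you allude to. You are right to flag this and to defer the sharp factor $(1+\tfrac p2)^{1/p}(1+\tfrac 2p)^{1/2}$ to Kufner--Opic. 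One remark worth adding for context: that factor is actually uniformly bounded (it is at most $2$ for all $p\ge 2$ and tends to $1$ as $p\to\infty$), so for the paper's downstream Theorems \ref{teo} and \ref{teo2} the precise form is immaterial -- the $p$-dependence of the radial Sobolev constants there comes entirely from $\mathcal{B}(w,p)$, and even a crude $O(1)$ or mildly $p$-dependent multiplier in the sufficiency direction would serve.
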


\section{Proofs of the results for radial functions}\label{sec:radial} 
We devote this section to the proof of Theorems \ref{teo} and \ref{teo2}, so that the focus here is entirely on radial functions. Nonradial issues will be addressed in Section \ref{sect:nonrad} and in Appendix \ref{sec-mckean}.
\subsection{The sub-hyperbolic range}\label{sec:sub-hyp}

We start by showing that, under suitable hypotheses on the weight $ w(r) = \psi(r)^{N-1} $, which correspond to choosing $ \psi \in \mathcal{A} $ in the ``sub-hyperbolic'' range, the supremum appearing in \eqref{sob-one-1} can be bounded quantitatively.

\begin{lemma}\label{lem-psi}
Let $ \psi \in \mathcal{A} $ satisfy the following assumptions:
\begin{equation}\label{eq:curv}
\psi(r) \ge r \quad \forall r \ge 0 \, , \qquad \psi^\prime(r) \ge 0 \quad \forall r \ge 0 \, , \qquad \frac{\psi^\prime(r)}{\psi(r)} \geq \dfrac{c}{r^{\alpha}} \quad \forall r \ge r_0 \, ,
\end{equation} 
for some $ \alpha \in (0,1) $ and $ c,r_0>0 $. Let $ N \in \mathbb{N} $ with $ N \ge 2 $. Then there exists a positive constant $C$, depending only on $ N , \alpha , c , r_0$, such that for every $ p \in\left(2,2^\ast\right) $ we have
\begin{equation}\label{cost} 
\sup_{r\in(0,\infty)} \left(\int_0^r\psi(s)^{N-1} \, ds\right)^{\frac{1}{p}} \left(\int_r^\infty \frac{1}{\psi(s)^{N-1}} \, ds \right)^{\frac 1 2}\leq \frac{C \, p^{\frac{1+\alpha}{2(1-\alpha)}}}{(p-2)^{\frac{\alpha}{1-\alpha}}} \, . 
\end{equation}
\end{lemma}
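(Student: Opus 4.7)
I would control the Muckenhoupt-type quantity
$$\mathcal{B}(\psi^{N-1},p)\;=\;\sup_{r>0}\,F(r)^{1/p}G(r)^{1/2}, \qquad F(r):=\!\int_0^r\psi^{N-1}ds,\ \ G(r):=\!\int_r^\infty\psi^{-(N-1)}ds,$$
by splitting the supremum at $r=r_0$. The region $r\ge r_0$ is where the differential inequality bites and produces the whole $p$-dependence; the region $r\in(0,r_0]$ contributes only a bounded extra constant thanks to the normalization $\psi'(0)=1$.

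\textbf{Step 1 (exponential lower bound).} Integrating $(\log\psi)'(s)\ge c\,s^{-\alpha}$ on $[r_0,\infty)$ gives
$$\psi(r)\ge\psi(r_0)\exp\!\bigl(\tfrac{c}{1-\alpha}(r^{1-\alpha}-r_0^{1-\alpha})\bigr)\qquad\forall\,r\ge r_0,$$
which in particular ensures $G(r)<\infty$ and fixes the decay rate.

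\textbf{Step 2 (sharp integral estimates for $r\ge r_0$).} From $(\psi^{N-1})'(s)=(N-1)(\psi'/\psi)\psi^{N-1}\ge (N-1)(c/s^\alpha)\psi^{N-1}$ one gets $\psi^{N-1}(s)\le\frac{s^\alpha}{c(N-1)}(\psi^{N-1})'(s)$; integration by parts on $[r_0,r]$ yields
$$\int_{r_0}^r\psi^{N-1}ds\le\frac{r^{\alpha}\psi(r)^{N-1}}{c(N-1)}.$$
Analogously, $-(\psi^{-(N-1)})'(s)\ge (N-1)(c/s^{\alpha})\psi^{-(N-1)}$, and an integration by parts plus absorption of the resulting lower-order iterative term (valid once $r$ exceeds a threshold $r_1$ depending only on $N,\alpha,c$) gives
$$\int_r^\infty\psi^{-(N-1)}ds\le\frac{2r^{\alpha}}{c(N-1)\psi(r)^{N-1}}.$$
Adding the bounded contribution $\int_0^{r_0}\psi^{N-1}$ and combining these two,
$$F(r)^{1/p}G(r)^{1/2}\le C\,r^{\alpha(1/p+1/2)}\psi(r)^{(N-1)(1/p-1/2)}\qquad\forall\,r\ge r_1.$$

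\textbf{Step 3 ($p$-dependent optimization).} Since $(N-1)(1/p-1/2)<0$, plugging the lower bound from Step 1 into the previous estimate turns it into
$$C\,r^{a}e^{-\gamma r^{1-\alpha}},\qquad a:=\alpha\!\left(\tfrac1p+\tfrac12\right),\quad\gamma:=\frac{(N-1)c(p-2)}{2p(1-\alpha)}.$$
Setting $y=r^{1-\alpha}$, the function $y^{a/(1-\alpha)}e^{-\gamma y}$ is maximized at $y^{\ast}=\tau/\gamma$ with $\tau=a/(1-\alpha)$, and the maximum equals $\tau^{\tau}e^{-\tau}\gamma^{-\tau}$. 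Unpacking,
$$\gamma^{-\tau}=\Bigl(\tfrac{2(1-\alpha)}{(N-1)c}\Bigr)^{\tau}\Bigl(\tfrac{p}{p-2}\Bigr)^{\tau}.$$
Since $\tau\in[\alpha/(2(1-\alpha)),\alpha/(1-\alpha)]$ uniformly for $p\in(2,2^{\ast}]$, the prefactors and $\tau^{\tau}e^{-\tau}$ are harmless. Bounding $p^{\tau}\le p^{(1+\alpha)/(2(1-\alpha))}$ (because $(1+\alpha)/(2(1-\alpha))\ge\alpha/(1-\alpha)\ge\tau$ and $p\ge 2$) and $(p-2)^{-\tau}\le C(p-2)^{-\alpha/(1-\alpha)}$ (controlling the change of exponent on $p\in(2,2^{\ast}]$) yields exactly the announced bound.

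\textbf{Step 4 (small-$r$ regime).} For $r\in(0,r_0]$ I would use monotonicity $F(r)\le r\psi(r)^{N-1}$ and, combining $\int_r^{r_0}\psi^{-(N-1)}\le(r_0-r)\psi(r)^{-(N-1)}$ (from monotonicity) with the tail bound $\int_{r_0}^\infty\psi^{-(N-1)}\le C_0\psi(r_0)^{-(N-1)}\le C_0 r_0^{-(N-1)}$ from Step~2 (using $\psi(r_0)\ge r_0$), to get $G(r)\le C\psi(r)^{-(N-1)}$ with $C$ depending only on $N,\alpha,c,r_0$. Then, using $\psi(r)\ge r$ and the negative exponent, $F(r)^{1/p}G(r)^{1/2}\le C r^{N/p-(N-1)/2}$, which is bounded on $(0,r_0]$ when $p\le 2N/(N-1)$; for the remaining range one instead uses the Euclidean bound $\int_0^r\psi^{N-1}\le C r^N$ near the origin (from $\psi(r)\sim r$ as $r\to 0^+$, which follows from $\psi\in\mathcal{A}$) together with $\int_r^\infty\psi^{-(N-1)}\le r^{-(N-2)}/(N-2)+C_0$ to obtain a uniformly bounded product on $(0,r_0]$ for all $p\in(2,2^\ast)$.

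\textbf{Main obstacle.} I expect Step 3 to be the delicate part: the optimal exponent $\tau=\alpha(p+2)/(2p(1-\alpha))$ depends on $p$, and some careful bookkeeping is needed to recast $(p/(p-2))^{\tau(p)}$ in the separable form $p^{(1+\alpha)/(2(1-\alpha))}/(p-2)^{\alpha/(1-\alpha)}$. A secondary subtlety is ensuring that the small-$r$ estimate in Step 4 is genuinely uniform in $\psi$, which relies crucially on the Euclidean tangent normalization $\psi'(0)=1$ encoded in the class $\mathcal{A}$.
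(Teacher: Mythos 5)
Your Steps 1--3 are essentially correct and constitute a genuine variant of the paper's argument. The paper shows that $Q(r):=F(r)^{1/p}G(r)^{1/2}$ tends to $0$ at both endpoints, hence attains an interior maximum $\overline{r}$, and then uses the stationarity identity $G(\overline{r})=p\,F(\overline{r})/\bigl(2\psi(\overline{r})^{2(N-1)}\bigr)$ to \emph{eliminate $G$ entirely}, reducing everything to $\sup_r \psi(r)^{-(N-1)}F(r)^{(p+2)/(2p)}$. You instead bound $G(r)$ directly from above by an integration by parts plus absorption, valid for $r\ge r_1$. For large $r$ the two routes produce the very same quantity $r^{\alpha(p+2)/(2p)}\psi(r)^{-(N-1)(p-2)/(2p)}$, and your optimization in Step~3, including the bookkeeping turning $(p/(p-2))^{\tau(p)}$ into $p^{(1+\alpha)/(2(1-\alpha))}(p-2)^{-\alpha/(1-\alpha)}$, is sound.

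The gap is in Step 4, the small-$r$ regime, and it is genuine. Your bound $G(r)\le C\psi(r)^{-(N-1)}$ (from the crude estimate $\int_r^{r_0}\psi^{-(N-1)}ds\le(r_0-r)\psi(r)^{-(N-1)}$) is weaker by a full power of $r$ than the sharp behaviour $G(r)\approx r^{-(N-2)}/(N-2)$ obtained from $\psi\ge r$; that is precisely why your exponent $N/p-(N-1)/2$ only stays nonnegative for $p\le 2N/(N-1)$, short of $2^\ast=2N/(N-2)$. For the remaining range you invoke the "Euclidean bound" $\int_0^r\psi^{N-1}\le C\,r^N$ near the origin, attributing it to $\psi(r)\sim r$ as $r\to 0^+$. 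But $\psi'(0)=1$ only gives a \emph{qualitative} asymptotic; the hypotheses give no quantitative upper bound $\psi(r)\le C\,r$ on $(0,r_0]$ with $C$ depending only on $N,\alpha,c,r_0$ (take $\psi(r)=r$ for $r\le\delta$ and then let $\psi$ climb steeply to a large value $L$ before $r_0$: the constant in $\int_0^r\psi^{N-1}\le C r^N$ then grows with $L$). So Step 4 as written cannot deliver a constant depending only on $N,\alpha,c,r_0$, which is what the lemma asserts. The clean fix is the paper's critical-point reduction: once $G$ is eliminated, the small-$r$ bound uses only $F(r)\le r\psi(r)^{N-1}$ and $\psi(r)\ge r$ and produces the exponent $\bigl(2N-(N-2)p\bigr)/(2p)\ge 0$ for all $p\le 2^\ast$, with no upper bound on $\psi$ needed.
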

\begin{proof}
First of all, let us establish that the l.h.s.~of \eqref{cost} is finite. To this end, set
\begin{equation}\label{eq:def-Q}
Q(r) := \left(\int_0^r\psi(s)^{N-1} \, ds\right)^{\frac{1}{p}}\left(\int_r^\infty \frac{1}{\psi(s)^{N-1}} \, ds \right)^{\frac 1 2} \qquad \forall r > 0 \, .
\end{equation}
The first and last inequality in \eqref{eq:curv} easily yield the bound from below
\begin{equation}\label{eq:stima-psi}
\psi(r) \ge \kappa \, e^{\frac{c}{1-\alpha} r^{1-\alpha}} \quad \forall r \ge r_0 \, , \qquad \text{where } \, \kappa \equiv \kappa(\alpha,c,r_0):={r_0} \, e^{-\frac{c}{1-\alpha} r_0^{1-\alpha}} ,
\end{equation}
which in particular ensures that $ Q(r) $ is smooth on $ (0,\infty) $. In addition, because $ \psi(r) \sim r $ as $ r \to 0 $ and $ p < 2^\ast $, it follows that $ \lim_{r \to 0} Q(r) = 0 $. In order to deal with the behavior of $ Q(r) $ at infinity, we need more estimates. To this aim, let us rewrite the last inequality in \eqref{eq:curv} as 
\begin{equation}\label{disfond} 
\psi(r)^{N-1} \le \frac{r^\alpha}{c(N-1)} \, \dfrac{d }{dr} \left( \psi^{N-1} \right)\!(r)  \qquad \forall r \ge r_0 \, .
\end{equation} 
Using \eqref{disfond} and integrating by parts between $ r_0 $ and $ r>r_0 $, we obtain: 
\begin{equation}\label{est-psi-parts}
c(N-1) \int_{r_0}^r \psi(s)^{N-1} \, ds  \le  r^{\alpha} \, \psi(r)^{N-1} - r_{0}^{\alpha} \, \psi(r_{0})^{N-1} - \alpha \int_{r_0}^r \frac{\psi(s)^{N-1} }{s^{1-\alpha}} \, ds \leq r^{\alpha} \,\psi(r)^{N-1} \, .
\end{equation}
Estimates \eqref{eq:stima-psi} and \eqref{est-psi-parts} plus the fact that $ \psi(r) $ is nondecreasing entail (for all $ r > r_0 $)
$$
\begin{aligned}
Q(r) & \le \left( \int_{0}^{r_0} \psi(s)^{N-1} \, ds +  \dfrac{r^{\alpha} \, \psi(r)^{N-1}}{c(N-1)} \right)^{\frac{1}{p}}\left(\int_r^\infty \frac{1}{\psi(s)^{N-1}} \, ds \right)^{\frac 1 2} \\
& = \left( \dfrac{1}{r^{\alpha} \, \psi(r)^{N-1}}\int_{0}^{r_0} \psi(s)^{N-1} \, ds +  \dfrac{1}{c(N-1)} \right)^{\frac{1}{p}}\left[ \left(r^{\alpha} \, \psi(r)^{N-1}\right)^{\frac{2}{p}}\int_r^\infty \frac{1}{\psi(s)^{N-1}} \, ds \right]^{\frac 1 2} \\
& \le \left( \frac{1}{r^{\alpha} \, \psi(r)^{N-1}} \int_{0}^{r_0} \psi(s)^{N-1} \, ds +  \dfrac{1}{c(N-1)}  \right)^{\frac{1}{p}}\left(\int_r^\infty s^{\frac{2\alpha}{p}} \psi(s)^{-\frac{(N-1)(p-2)}{p}}  \, ds \right)^{\frac 1 2} \\
 & \le \left( \frac{e^{-\frac{c(N-1)}{1-\alpha} r^{1-\alpha}}}{\kappa^{\frac{p-2}{2}} r^{\alpha}} \int_{0}^{r_0} \psi(s)^{N-1} \, ds +  \dfrac{\kappa^{-\frac{(N-1)(p-2)}{2}}  }{c(N-1)}  \right)^{\frac{1}{p}}\left( \int_r^\infty s^{\frac{2\alpha}{p}} e^{-\frac{c(N-1)(p-2)}{(1-\alpha)p}s^{1-\alpha}} ds \right)^{\frac 1 2} ,
\end{aligned}
$$ 
whence $ \lim_{r \to \infty} Q(r) = 0 $. As a consequence, because $ Q(r) $ is smooth and positive in $(0,\infty)$, it necessarily admits a maximum at some $ \overline{r}>0 $, which is a critical point. Since 
$$ 
Q^\prime(r)  = \frac{\psi(r)^{N-1}}{p}  \left(\int_0^r\psi(s)^{N-1} \, ds\right)^{\frac{1}{p}-1} \left(\int_r^\infty \frac{1}{\psi(s)^{N-1}} \, ds \right)^{\frac 1 2} \left( 1 -  \frac{p\int_0^{{r}}\psi(s)^{N-1} \, ds}{2\,\psi({r})^{2N-2}\int_r^\infty \frac{1}{\psi(s)^{N-1}} \, ds} \right) 
$$ 
at $ r=\overline{r} $ we find the identities
\begin{equation}\label{id-q-crit}
\int_{\overline{r}}^\infty \frac{1}{\psi(s)^{N-1}} \, ds  = \frac{p\int_0^{\overline{r}}\psi(s)^{N-1} \, ds}{2\,\psi(\overline{r})^{2N-2}} \qquad \Longrightarrow \qquad Q(\overline{r}) = \frac{\left( \frac{p}{2} \right)^{\frac 12} }{\psi(\overline{r})^{N-1}}  \left(\int_0^{\overline{r}}\psi(s)^{N-1} \, ds\right)^{\frac{p+2}{2p}} .
\end{equation} 
In particular, we can infer that
\begin{equation}\label{eq:sup-ineq}
\sup_{r \in (0,\infty)} Q(r) \le \left( \frac{p}{2} \right)^{\frac 12} \sup_{r \in (0,\infty)} \frac{1}{\psi(r)^{N-1}}  \left(\int_0^{r}\psi(s)^{N-1} \, ds\right)^{\frac{p+2}{2p}} .
\end{equation} 
Let us focus on the r.h.s.~of \eqref{eq:sup-ineq}. First of all note that the first two inequalities in \eqref{eq:curv}  yield 
\begin{equation}\label{eq:sup-2} 
\sup_{r \in (0,r_0)} \frac{1}{\psi(r)^{N-1}}  \left(\int_0^{r}\psi(s)^{N-1} \, ds\right)^{\frac{p+2}{2p}}\le r_0^{\frac{(N-2)(2^{\ast} - p)}{2p}} ,
\end{equation}
where in the case $ N=2 $ we mean $ (N-2)(2^\ast-p) = 4 $. On the other hand, by exploiting \eqref{eq:stima-psi}, \eqref{est-psi-parts}, \eqref{eq:sup-2} and the fact that $ \psi $ is nondecreasing, we obtain:
\begin{equation}\label{eq:sup-3} 
\begin{aligned}
 & \sup_{r \in (r_0,\infty)} \frac{1}{\psi(r)^{N-1}}  \left(\int_0^{r}\psi(s)^{N-1} \, ds \right)^{\frac{p+2}{2p}} \\
 \le & \sup_{r \in (r_0,\infty)} \left( \psi(r_0)^{-\frac{2p(N-1)}{p+2}}  \int_0^{r_0}\psi(s)^{N-1} \, ds +  \psi(r)^{-\frac{2p(N-1)}{p+2}}  \int_{r_0}^{r}\psi(s)^{N-1} \, ds \right)^{\frac{p+2}{2p}} \\
 \le & \sup_{r \in (r_0,\infty)} \left( r_0^{\frac{(N-2)(2^{\ast} - p)}{p+2}} + \frac{r^\alpha \, \psi(r)^{-\frac{(N-1)(p-2)}{p+2}}}{c(N-1)} \right)^{\frac{p+2}{2p}} \\
 \le & \sup_{r \in (r_0,\infty)} \left( r_0^{\frac{(N-2)(2^{\ast} - p)}{p+2}} + \frac{r^\alpha \, \kappa^{\frac{-(N-1)(p-2)}{p+2}} \, e^{ -\frac{c(N-1)(p-2)}{(p+2)(1-\alpha)} \, r^{1-\alpha}} }{ c(N-1)} \right)^{\frac{p+2}{2p}} \\
\le & \left( r_0^{\frac{(N-2)(2^{\ast} - p)}{p+2}} + \frac{[\alpha(p+2)]^{\frac{\alpha}{1-\alpha}}}{[c(N-1)]^{\frac{1}{1-\alpha}}\,(p-2)^{\frac{\alpha}{1-\alpha}}} \, r_0^{-\frac{(N-1)(p-2)}{p+2}} \, e^{\frac{c(N-1)(p-2)}{(p+2)(1-\alpha)}\,r_0^{1-\alpha}-\frac{\alpha}{1-\alpha}} \right)^{\frac{p+2}{2p}} ,
\end{aligned}
\end{equation}  
where we have computed explicitly the last supremum in \eqref{eq:sup-3} (over the whole $ \mathbb{R}^+ $) recalling the definition of $\kappa$. Hence, by combining \eqref{eq:sup-ineq}, \eqref{eq:sup-2} and \eqref{eq:sup-3}, estimate \eqref{cost} readily follows upon letting $ p \downarrow 2 $ and (in the case $ N=2 $) $ p \to \infty $. 
\end{proof}

We are now in position to prove Theorem \ref{teo}. 

\begin{proof}[Proof of Theorem \ref{teo}]
We consider the case $ p<2^\ast $ only, as it is already known that the Euclidean Sobolev inequality holds on any Cartan-Hadamard manifold. Let us first establish \eqref{tsfinale} and then show optimality according to \eqref{tsfinale-opt}. To our purposes, we introduce the function
\begin{equation}\label{psi-ast}
\psi_\star(r) := \left(\dfrac{\meas(S_{r})}{\omega_{N-1}}\right)^{\frac{1}{N-1}} \qquad \forall r \ge 0 \, ,
\end{equation} 
where $ \omega_{N-1} $ stands for the total surface measure of the unit sphere $ \mathbb{S}^{N-1} $. It is an elementary fact that $ \psi_\star \in \mathcal{A} $. Indeed, recalling \eqref{def-meas} and the regularity of $ A(r,\theta) $, we have $ \psi_\star \in C^\infty((0,\infty)) $. Furthermore, since $ \meas(S_{r}) / r^{N-1} \to \omega_{N-1} $ as $ r \to 0 $, we easily deduce that $ \psi_\star \in C^1([0,\infty)) $, $ \psi_\star(0)=0 $ and $ \psi_\star^\prime(0)=1 $. We aim at showing that $ \psi_\star $ fulfills the hypotheses \eqref{eq:curv} of Lemma \ref{lem-psi} for some positive $ r_0 \equiv r_0(\beta,C_0,R_0) $, $ c \equiv c(N,\beta,C_0,R_0) $ and $ \alpha=\beta/2 $. Thanks to \eqref{lap-m}, \eqref{lap-euc} and Lemma \ref{lem-GMV}, the following inequalities hold: 
\begin{equation}\label{stimasum-2}
\frac{\frac{\partial}{\partial r} A(r,\theta)}{A(r,\theta)} \ge c \, r^{-\frac{\beta}{2}} \qquad \forall (r,\theta) \in [r_0,\infty) \times \mathbb{S}^{N-1} \setminus \mathcal{P}  \, ,
\end{equation} 
\begin{equation}\label{stimasum-1}
\frac{\frac{\partial}{\partial r} A(r,\theta) }{A(r,\theta)} \geq \dfrac{N-1}{r} \qquad \forall (r,\theta) \in (0,\infty) \times \mathbb{S}^{N-1} \setminus \mathcal{P} \,,
\end{equation} 
for suitable constants $ c,r_0 >0 $ as above. By integrating \eqref{stimasum-1} and using again the asymptotic behavior of $ r \mapsto  \meas(S_{r}) $ as $ r \to 0 $, we find the well-known bound 
\begin{equation*}\label{eps-S}
\meas(S_{r})\geq \omega_{N-1} \, r^{N-1} \qquad \Longrightarrow \qquad \psi_\star(r) \ge r \qquad \forall r \ge 0 \, .
\end{equation*}
The fact that $\psi_\star^\prime(r) \ge 0 $ everywhere is a trivial consequence of \eqref{stimasum-1}, so we are left with establishing the rightmost inequality in \eqref{eq:curv}. To this aim, note that \eqref{stimasum-2} entails
\begin{equation}\label{psi-ast-e2}
\frac{d}{dr} \, \meas(S_r) =  \int_{\mathbb{S}^{N-1}} \frac{\partial}{\partial r} A(r,\theta) \, d\theta \ge c \, r^{-\frac{\beta}{2}} \int_{\mathbb{S}^{N-1}} A(r,\theta) \, d\theta = c \, r^{-\frac{\beta}{2}} \, \meas(S_r)  \qquad  \forall r \ge r_0 \, ,
\end{equation}
whence
\begin{equation}\label{psi-ast-e3}
\frac{\psi^\prime_\star(r)}{\psi_\star(r)} \ge \frac{c}{N-1} \, r^{-\frac{\beta}{2}} \qquad \forall r \ge r_0 \, ,
\end{equation}
namely the claimed inequality with $ \alpha = \beta/2 $, upon relabeling $c$. We have thus proved that the function $ \psi_\star $ defined in \eqref{psi-ast} satisfies all the assumptions of Lemma \ref{lem-psi}. As a result, we have 
\begin{equation}\label{cost-thm} 
\sup_{r\in(0,\infty)} \left(\int_0^r\psi_\star(s)^{N-1} \, ds\right)^{\frac{1}{p}} \left(\int_r^\infty \frac{1}{\psi_\star(s)^{N-1}} \, ds \right)^{\frac 1 2}\leq \frac{C \, p^{\frac{2+\beta}{2(2-\beta)}}}{(p-2)^{\frac{\beta}{2-\beta}}} 
\end{equation}
for a suitable $C\equiv C(N,\beta,C_0,R_0)>0 $. Thanks to \eqref{cost-thm}, we can apply Proposition \ref{thm:ko} with $ w(r) = \psi_\star(r)^{N-1} $, which ensures the validity of the Sobolev-type inequality
\begin{equation}\label{sob-ineq-1} 
\begin{gathered}
\left( \int_0^\infty \left|g(r)\right|^p \, \psi_\star(r)^{N-1} \, dr \right)^{\frac 1p} \le \left( 1+\tfrac p2 \right)^{\frac 1p} \left(  1+\tfrac 2p \right)^{\frac 12} \frac{C \, p^{\frac{2+\beta}{2(2-\beta)}}}{(p-2)^{\frac{\beta}{2-\beta}}} \left( \int_0^\infty \left| g^\prime(r) \right|^2 \, \psi_\star(r)^{N-1} \, dr \right)^{\frac 12} \\
\forall  g \in C^1_c([0,\infty)) \, .
\end{gathered}
\end{equation}
In order to pass from \eqref{sob-ineq-1} to \eqref{tsfinale}, it is enough to observe that $ f \in C^1_{c:\mathrm{rad}}(\mathbb{M}^N) $ implies $ f \equiv  f(r) \in C^1_c([0,\infty)) $ and the validity of the following identities:
\begin{equation*}\label{sob-ineq-2}
\begin{aligned}
\left\| f \right\|_{L^p\left(\mathbb{M}^N\right)} = & \left( \int_0^\infty \int_{\mathbb{S}^{N-1}} \left| f(r) \right|^p A(r,\theta)\, d\theta \, dr \right)^{\frac 1p} = \left( \omega_{N-1} \int_0^\infty \left| f(r) \right|^p \psi_\star(r)^{N-1} \, dr \right)^{\frac 1p}  , \\ 
 \left\| \nabla f \right\|_{L^2\left(\mathbb{M}^N\right)} = & \left(  \int_0^\infty \int_{\mathbb{S}^{N-1}} \left| f^\prime(r) \right|^2 A(r,\theta)\, d\theta \, dr \right)^{\frac 12} =  \left( \omega_{N-1} \int_0^\infty \left| f^\prime(r) \right|^2 \psi_\star(r)^{N-1} \, dr \right)^{\frac 12} , \\
\end{aligned} 
\end{equation*}
so that \eqref{tsfinale} is established upon relabeling $C$. 
Let us finally deal with optimality. To this end, take any function $ \psi \in \mathcal{A} $ such that 
\begin{equation}\label{c1-opt} 
\psi^{\prime\prime}(r) \ge 0 \quad \forall r > 0 \, ,   \qquad  \frac{\psi^{\prime\prime}(r)}{\psi(r)} = C_0 \, r^{-\beta}  \quad \forall r \ge R_0 \, ,
\end{equation}
which ensures that the associated model manifold $ \mathbb{M}^N_\psi $ is Cartan-Hadamard and complies with \eqref{sezionale}. Indeed, arguing as in \cite[Lemma 4.1]{GMV}, it is not difficult to prove that \eqref{c1-opt} implies
\begin{equation}\label{c1-opt-bis}
\frac{\psi^{\prime}(r)}{\psi(r)} \sim \sqrt{C_0} \, r^{-\frac{\beta}{2}}  \qquad \text{as } r \to \infty \, ,
\end{equation} 
where by $ a(r) \sim b(r) $ we mean that the ratio $ a(r)/b(r) $ tends to $1$. In particular, \eqref{c1-opt-bis} entails 
\begin{equation}\label{c1-opt-ter}
\frac{\psi^{\prime}(r)}{\psi(r)} \le 2 \, \sqrt{C_0} \, r^{-\frac{\beta}{2}}  \quad \forall r \ge r_0  \qquad \Longrightarrow \qquad \psi(r) \le c_1 \, e^{c_2 \, r^{\frac{2-\beta}{2}} } \quad \forall r \ge r_0 \, ,
\end{equation} 
$ r_0 \ge 1 $ and $c_1,c_2 > 0  $ being suitable constants that depend on $ \psi $, whose exact values are not relevant to our purposes. Hence, \eqref{c1-opt-ter} plus a simple integration by parts yield
\begin{equation}\label{c2-opt}
\int_r^\infty \frac{1}{\psi(s)^{N-1}} \, ds \ge \frac{r^{\frac{\beta}{2}}}{2\sqrt{C_0}(N-1) \, \psi(r)^{N-1}} \qquad \forall r \ge r_0 \, .
\end{equation} 
Similarly we obtain
\begin{equation*}\label{c2-opt-bis}
\int_{r_0}^r \psi(s)^{N-1} \, ds \ge \frac{1}{2\sqrt{C_0}(N-1)} \left(  r^{\frac{\beta}{2}} \, \psi(r)^{N-1} - r_0^{\frac{\beta}{2}} \, \psi(r_0)^{N-1} - \frac{\beta}{2} \, \int_{r_0}^r \frac{\psi(s)^{N-1}}{s^{\frac{2-\beta}{2}}} \, ds \right) \quad \forall r \ge r_0 \, ,
\end{equation*} 
which entails (recall that $ \beta \le 2 $)
\begin{equation}\label{c2-opt-quater}
\int_{r_0}^r \psi(s)^{N-1} \, ds \ge \frac{r^{\frac{\beta}{2}} \, \psi(r)^{N-1}}{4\sqrt{C_0}(N-1)+2} \qquad \forall r \ge \hat{r}_0 \, ,
\end{equation}  
provided $ \hat{r}_0 $ is selected, for instance, so as to satisfy $ \psi(\hat{r}_0)^{N-1} =2 \, \psi(r_0)^{N-1} $. By combining \eqref{c1-opt-ter}, \eqref{c2-opt} and \eqref{c2-opt-quater}, we deduce that
\begin{equation}\label{opt-11}
\left(\int_0^r \psi(s)^{N-1} \, ds \right)^{\frac{1}{p}} \left(\int_r^\infty \frac{1}{\psi(s)^{N-1}} \, ds \right)^{\frac 1 2} \ge C \, \frac{r^{\frac{\beta(p+2)}{4p}}}{\psi(r)^{\frac{(N-1)(p-2)}{2p}}}  \ge C \, \frac{r^{\frac{\beta(p+2)}{4p}}}{e^{c_2 \, \frac{(N-1)(p-2)}{2p} \, r^{\frac{2-\beta}{2}}}} \quad \forall r \ge \hat{r}_0 \, ,
\end{equation}
where from here on $ C $ denotes a general positive constant that can be taken independent of $ p \in (2,2^\ast] \cap (2,\infty) $, which will not be relabeled. A straightforward calculation shows that the maximum over $ r \in (0,\infty) $ of the rightmost term in \eqref{opt-11} is attained at 
\begin{equation*}\label{opt-12}
\overline{r} = \left[ \frac{ \beta (p+2)}{c_2(N-1)(2-\beta)(p-2)} \right]^{\frac{2}{2-\beta}} ,
\end{equation*} 
which ensures that
\begin{equation}\label{opt-13}
\sup_{r \in (0,\infty)} \left(\int_0^r \psi(s)^{N-1} \, ds \right)^{\frac{1}{p}} \left(\int_r^\infty \frac{1}{\psi(s)^{N-1}} \, ds \right)^{\frac 1 2} \ge C \left( \frac{p+2}{p-2} \right)^{\frac{\beta}{2-\beta}}
\end{equation} 
under the constraint
\begin{equation}\label{opt-14}
\left[ \frac{ \beta (p+2)}{c_2(N-1)(2-\beta)(p-2)} \right]^{\frac{2}{2-\beta}}  \ge \hat{r}_0 \, .
\end{equation}
It is apparent that \eqref{opt-13}--\eqref{opt-14}, together with \eqref{sob-one-2}, yield \eqref{tsfinale-opt} at least for $ N \ge 3 $, where $ 2^\ast < \infty $. We are left with the establishing the correct behavior as $ p \to \infty $ in the case $ N=2 $. To this aim note that, as a simple consequence of the fact that $ \psi(r) \sim r $ as $ r \to 0 $, we have 
\begin{equation*}\label{opt-15}
\sup_{r \in (0,\infty)} \left(\int_0^r \psi(s) \, ds \right)^{\frac{1}{p}} \left(\int_r^\infty \frac{1}{\psi(s)} \, ds \right)^{\frac 1 2} \ge C \sup_{r \in \left(0, \, e^{-2}\right)} \, r^{\frac 2 p} \left(-\log r \right)^{\frac 12}  \ge C \, p^{\frac{1}{2}} \, ,
\end{equation*} 
which, upon exploiting again \eqref{sob-one-2}, also ensures the validity of \eqref{tsfinale-opt} as $ p \to \infty $. 
\end{proof} 

\subsection{The quasi-Euclidean case}\label{sec:qe} 

Similarly to Subsection \ref{sec:sub-hyp} we first show that, for appropriate weights $ w(r) = \psi(r)^{N-1} $ associated with ``quasi-Euclidean'' model manifolds, one can bound in a quantitative way the supremum appearing in \eqref{sob-one-1}.

\begin{lemma}\label{lem-psi2}
Let $ \psi \in \mathcal{A} $ satisfy the following assumptions: 
\begin{equation}\label{eq:curv2}
\psi(r) \ge r \quad \forall r \ge 0 \, , \quad \psi^\prime(r) \ge 0 \quad \forall r \ge 0  \, , \quad \dfrac{\psi'(r)}{\psi(r)} \geq \dfrac{c}{r}-\dfrac{c'}{r^q} \quad \forall r \ge r_0 \, ,
\end{equation} 
for some $ c>1 $, $c'>0$, $q>1$ and $ r_0>0 $. Let $ N \in \mathbb{N} $ with $ N \ge 2 $. Then 
\begin{equation}\label{cost2}
\sup_{r\in(0,\infty)} \left(\int_0^r\psi(s)^{N-1} \, ds\right)^{\frac{1}{p}}\left(\int_r^\infty \frac{1}{\psi(s)^{N-1}} \, ds \right)^{\frac 1 2} \le C \, \sqrt{p} \qquad \forall p\in\left[ \tfrac{2\tilde{N}}{\tilde{N}^{\phantom{a}}\!\!\!-2} , 2^\ast \right) ,
\end{equation}
where $\tilde{N} \equiv \tilde{N}(N,c):=c(N-1)+1$ and $C$ is a positive constant depending only on $ N, c, c^\prime, q, r_0 $. 
\end{lemma}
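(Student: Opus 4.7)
The plan is to follow the proof of Lemma~\ref{lem-psi}, substituting a polynomial lower bound on $\psi$ (appropriate to the quasi-Euclidean regime) for the stretched-exponential lower bound used there. This is precisely what produces the effective dimension $\tilde{N}=c(N-1)+1$ in place of $N$, and hence the critical Sobolev exponent $\tilde{2}=2\tilde{N}/(\tilde{N}-2)$ as the threshold below which the supremum of $Q$ becomes infinite.

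First, I would integrate the third inequality in \eqref{eq:curv2} from $r_0$ to $r$. Since $q>1$, the tail integral $\int_{r_0}^\infty s^{-q}\,ds$ converges, so the correction term $-c'/r^q$ contributes only a bounded multiplicative factor, yielding
\[
\psi(r) \ge \kappa\, r^c \qquad \forall r \ge r_0,
\]
with explicit $\kappa\equiv\kappa(c,c',q,r_0)>0$; note that $c(N-1)=\tilde{N}-1>1$ since $c>1$ and $N\ge 2$. Next, mirroring the passage from \eqref{disfond} to \eqref{est-psi-parts}, I rewrite the ODE hypothesis as
\[
\tfrac{d}{dr}(\psi^{N-1})(r) \ge \left( \tfrac{c(N-1)}{r} - \tfrac{c'(N-1)}{r^q} \right) \psi(r)^{N-1},
\]
choose $r_1\ge r_0$ large (depending only on $c,c',q,N$) so that the subtracted term is dominated by half of the main one, and integrate by parts to obtain, for some small $\varepsilon>0$,
\[
\int_0^r \psi(s)^{N-1}\,ds \le I_0 + \frac{r\,\psi(r)^{N-1}}{\tilde{N}-\varepsilon} \qquad \forall r \ge r_1,
\]
with $I_0\equiv I_0(\psi,r_1)$ a constant independent of $p$.

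With these in hand, I would introduce $Q(r)$ as in \eqref{eq:def-Q} and repeat the analysis of Lemma~\ref{lem-psi}. The bound $\psi(r)\ge r$ and the constraint $p<2^\ast$ give $Q(r)\to 0$ as $r\to 0$ exactly as there. For large $r$ and $p>\tilde{2}$, combining the integration-by-parts bound with the inequality $\psi(r)^{(N-1)(2-p)/p}\le \kappa^{(N-1)(2-p)/p}\,r^{(\tilde{N}-1)(2-p)/p}$ (valid since the exponent is negative) shows that $Q(r)\to 0$ at infinity as well. Hence for each $p\in(\tilde{2},2^\ast)$ a global maximum is attained at some finite $\bar r>0$, and the critical-point identity of \eqref{id-q-crit},
\[
Q(\bar r) = \sqrt{\tfrac{p}{2}}\;\psi(\bar r)^{-(N-1)} \left( \int_0^{\bar r} \psi(s)^{N-1}\,ds \right)^{(p+2)/(2p)},
\]
combined with the preceding bounds and the inequality $\psi(\bar r)\ge\kappa\,\bar r^{c}$, yields
\[
Q(\bar r) \le C\,\sqrt{p}\;\bar r^{\,[2\tilde{N}-(\tilde{N}-2)p]/(2p)}.
\]
Since the exponent of $\bar r$ is nonpositive for $p\ge\tilde{2}$, this last factor is controlled uniformly by its value at $\bar r=r_1$; the complementary range $0<\bar r\le r_1$ is handled by elementary bounds using $\psi(r)\ge r$ on the fixed compact interval $[0,r_1]$. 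This establishes \eqref{cost2} for $p\in(\tilde{2},2^\ast)$, and the endpoint $p=\tilde{2}$ follows by letting $p\downarrow\tilde{2}$, since the supremum $\mathcal{B}(w,p)$ is lower semicontinuous in $p$ (equivalently, the target one-dimensional Sobolev-type inequality depends continuously on $p$ when tested against any fixed admissible $g$).

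The main obstacle I anticipate is precisely the endpoint $p=\tilde{2}$, where $Q$ no longer decays at infinity and the ``maximum at a finite critical point'' step of Lemma~\ref{lem-psi} does not apply directly. The limiting argument above circumvents this, but alternatively one could show that $\limsup_{r\to\infty}Q(r)$ is itself bounded by a constant of the same form $C\sqrt{p}$, by applying an analogous integration-by-parts trick to the tail integral $\int_r^\infty\psi^{-(N-1)}$ (valid because $\tilde N-2>0$) and combining with the two polynomial bounds. A subsidiary technical point is that the threshold $r_1$ grows as $q\downarrow 1$; this is absorbed into the constant $C$ of \eqref{cost2} (which is allowed to depend on $q$) and leaves the crucial $\sqrt{p}$ scaling untouched, since the latter arises solely from the $\sqrt{p/2}$ factor in the critical-point identity.
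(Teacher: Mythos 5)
Your proposal follows essentially the same strategy as the paper's proof of Lemma~\ref{lem-psi2}: the polynomial lower bound $\psi(r)\ge\kappa\,r^c$, the integration-by-parts bound on $\int_0^r\psi(s)^{N-1}\,ds$, and the critical-point identity \eqref{id-q-crit} with its $\sqrt{p/2}$ factor are all in place and correctly deployed.

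There is, however, one real gap as written. You claim that, for $p>\tilde 2$, the integration-by-parts bound on $\int_0^r\psi^{N-1}\,ds$ together with $\psi(r)\ge\kappa\,r^c$ ``shows that $Q(r)\to 0$ at infinity.'' Those two ingredients alone are not sufficient. Only \emph{lower} bounds on $\psi$ are available, so $\psi$ may grow arbitrarily fast above $\kappa\,r^c$, in which case $\left(\int_0^r\psi^{N-1}\,ds\right)^{1/p}\lesssim\left(r\,\psi(r)^{N-1}\right)^{1/p}$ is unbounded and the only absolute tail estimate one gets from $\psi(s)\ge\kappa\,s^c$, namely $\int_r^\infty\psi(s)^{-(N-1)}\,ds\le\kappa^{-(N-1)}\,r^{2-\tilde N}/(\tilde N-2)$, cannot offset it since it carries no dependence on $\psi(r)$. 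What is needed is the \emph{complementary} integration-by-parts bound on the tail,
\[
\int_r^\infty\frac{1}{\psi(s)^{N-1}}\,ds\ \lesssim\ \frac{r}{\psi(r)^{N-1}}\qquad\text{for } r\text{ large},
\]
which the paper obtains in \eqref{est-psi-part3} by the same trick applied to $\int_r^\infty\psi^{-(N-1)}$. Once both bounds are available, the two factors of $Q(r)$ telescope and yield $Q(r)\le C\,r^{[2\tilde N-(\tilde N-2)p]/(2p)}$ for $r$ large, from which $Q(r)\to 0$ at infinity when $p>\tilde 2$ (and $\limsup_{r\to\infty}Q(r)\le C$ already at $p=\tilde 2$). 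You mention this very trick only at the very end, as an alternative route for the endpoint; it is in fact indispensable already in the main line of the argument, before one can assert that the supremum of $Q$ is attained at a finite critical point. Note also that once the tail bound is in place, the paper handles the endpoint $p=\tilde 2$ directly via the dichotomy ``either $Q$ has a maximum or $\sup Q=\limsup_{r\to\infty}Q(r)\le C$,'' so your lower-semicontinuity limiting argument, while valid, is not needed. Apart from this misplacement the proposal is sound, and the estimate $Q(\bar r)\le C\sqrt{p}\,\bar r^{[2\tilde N-(\tilde N-2)p]/(2p)}$ together with the treatment of small $\bar r$ via $\psi(r)\ge r$ does match the paper's conclusion.
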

\begin{proof}
We will not provide full details (for which we refer to \cite[Lemma 4.3]{R}), since the strategy follows closely the lines of proof of Lemma \ref{lem-psi}. Let $ Q(r) $ be defined by \eqref{eq:def-Q}.
By integrating the last inequality in \eqref{eq:curv2} (and taking advantage of the first one as well), we obtain:
\begin{equation}\label{eq:stima-psi2}
\psi(r)\geq \kappa \, r^c \quad \forall r \ge r_0 \, , \qquad \text{where } \, \kappa \equiv  \kappa(c,c',q,r_0) > 0 \, ,
\end{equation}
which ensures that $ Q(r) $ is a smooth function of $r>0$, given the finiteness of the integrals involved. Moreover, because $ \psi(r) \sim r $ as $ r \to 0 $ and $ p < 2^\ast $, it is immediate to check that $ \lim_{r \to 0} Q(r) = 0 $. In order to deal with the limit at infinity, we need again some integral bounds. Still the rightmost inequality in \eqref{eq:curv2} yields 
\begin{equation}\label{ee5}
\psi(r)^{N-1}\leq\dfrac{2r}{(c+1)(N-1)}\dfrac{d}{dr}\left( \psi^{N-1} \right)\!(r) \quad \forall r \ge \hat{r}_0 \, , \qquad \text{for some } \, \hat{r}_0 \equiv \hat{r}_0(c,c^\prime,q,r_0) > r_0  \, .
\end{equation}
Upon integrating by parts \eqref{ee5} between $ \hat{r}_0 $ and $ r>\hat{r}_0 $, we deduce that
\begin{equation}\label{est-psi-parts2}
\frac{(c+1)(N-1)}{2}\int_{\hat{r}_0}^r \psi(s)^{N-1} \, ds  \le  r \, \psi(r)^{N-1} - \hat{r}_{0} \, \psi(\hat{r}_{0})^{N-1} - \int_{\hat{r}_0}^r \, \psi(s)^{N-1} \, ds  \leq  r \, \psi(r)^{N-1}  \, .
\end{equation}
In a similar way, we can infer that 
\begin{equation*} 
\frac{(c+1)(N-1)}{2} \int_r^\infty\dfrac{1}{\psi(s)^{N-1}} \, ds \leq \dfrac{r}{\psi(r)^{N-1}}+\int_r^\infty\dfrac{1}{\psi(s)^{N-1}} \, ds \qquad \forall r \ge \hat{r}_0  \, ,
\end{equation*}
whence 
\begin{equation}\label{est-psi-part3}
\int_r^\infty\dfrac{1}{\psi(s)^{N-1}}\, ds\leq \dfrac{2}{(c-1)(N-1)+2(N-2)} \, \dfrac{r}{\psi(r)^{N-1}} \qquad \forall r \ge \hat{r}_0 \, .
\end{equation} 
By plugging \eqref{est-psi-parts2} in \eqref{eq:def-Q}, exploiting \eqref{est-psi-part3}, the fact that $ \psi'(r)\ge 0 $ and \eqref{eq:stima-psi2}, we obtain: 
\begin{equation*}\label{stimaQlemma4}
\begin{aligned}
Q(r) & \le \left( \frac{1}{r \, \psi(r)^{N-1}} \int_{0}^{\hat{r}_0} \psi(s)^{N-1} \, ds +  \dfrac{2}{(c+1)(N-1)}  \right)^{\frac{1}{p}}\left[ \left(r \, \psi(r)^{N-1} \right)^{\frac{2}{p}} \int_r^\infty \frac{1}{\psi(s)^{N-1}} \, ds \right]^{\frac 1 2} \\
& \le \left( \frac{1}{r \, \psi(r)^{N-1}} \int_{0}^{\hat{r}_0} \psi(s)^{N-1} \, ds +  \dfrac{2}{(c+1)(N-1)}  \right)^{\frac{1}{p}}\left[ \frac{2 \, r^{\frac{p+2}{p}} \psi(r)^{-\frac{(N-1)(p-2)}{p}} }{(c-1)(N-1)+2(N-2)} \right]^{\frac 1 2} \\
& \le C \, r^{\frac{p+2}{2p}-\frac{c(N-1)(p-2)}{2p}} 
\end{aligned}
\end{equation*}
for all $ r > \hat{r}_0 $, where from here on $ C $ stands for a general positive constant depending only on $ N, c, c^\prime, q, r_0 $ (that we will not relabel). In particular, we have that
\begin{equation*}\label{lss}
 p \geq \tfrac{ 2\tilde{N} }  {  \tilde{N}^{ \phantom{a}^{  \phantom{a}^{ \phantom{a} } }} \!\!\!\!\!\!\!\!\!  -2  }  \qquad \Longleftrightarrow \qquad {p+2}-{c(N-1)(p-2)} \leq 0  \qquad \Longrightarrow \qquad \limsup_{r \to \infty} Q(r) \le C \, .
\end{equation*}
There are two possibilities: either $Q(r)$ does not admit a maximum, in which case we can claim that $ Q(r) < C $ for all $ r>0 $ and hence \eqref{cost2} is established, or it admits a maximum at some $ \overline{r}>0  $. In the latter case, by carrying out the same computations as in the proof of Lemma \ref{lem-psi}, formulas \eqref{id-q-crit}, \eqref{eq:sup-ineq} and \eqref{eq:sup-2} (with $ r_0 $ replaced by $ \hat{r}_0 $) are still true. As a result, upon exploiting also \eqref{eq:stima-psi2}, \eqref{est-psi-parts2} and the fact that $ \psi $ is nondecreasing, we deduce the following: 
\begin{equation}\label{eq:sup-3bis}
\begin{aligned}
 & \sup_{r \in (\hat{r}_0,\infty)} \frac{1}{\psi(r)^{N-1}}  \left(\int_0^{r}\psi(s)^{N-1} \, ds\right)^{\frac{p+2}{2p}} \\
 \le & \sup_{r \in (\hat{r}_0,\infty)} \left( \psi(\hat{r}_0)^{-\frac{2p(N-1)}{p+2}} \int_0^{\hat{r}_0}\psi(s)^{N-1} \, ds +  \psi(r)^{-\frac{2p(N-1)}{p+2}}  \int_{\hat{r}_0}^{r}\psi(s)^{N-1} \, ds \right)^{\frac{p+2}{2p}} \\
 \le & \sup_{r \in (\hat{r}_0,\infty)} \left( \hat{r}_0^{\frac{(N-2)(2^{\ast} - p)}{p+2}} + \frac{2r\,\psi(r)^{-\frac{(N-1)(p-2)}{p+2}}}{(c+1)(N-1)}   \right)^{\frac{p+2}{2p}} \\
\le & \left( \hat{r}_0^{\frac{(N-2)(2^{\ast} - p)}{p+2}} + C \, \hat{r}_0^{1-\frac{c(N-1)(p-2)}{p+2}}\right)^{\frac{p+2}{2p}} .
\end{aligned}
\end{equation}  
Note that for $ N=2 $ we still mean $ (N-2)(2^\ast-p) = 4 $. The validity of  \eqref{cost2} is then a consequence of \eqref{eq:sup-ineq}, \eqref{eq:sup-2} (with $ r_0 $ replaced by $ \hat{r}_0 $) and \eqref{eq:sup-3bis}, up to relabeling $C$. 
\end{proof}

We can finally prove Theorem \ref{teo2}. 

\begin{proof}[Proof of Theorem \ref{teo2}]
We aim at showing that the function $ \psi_\star $ defined by \eqref{psi-ast}
satisfies the hypotheses of Lemma \ref{lem-psi2}. The first two inequalities of \eqref{eq:curv2}, along with the fact that $ \psi_\star \in \mathcal{A} $, follow by reasoning as in the proof of Theorem \ref{teo}. As concerns the last one, some adaptations have to be performed: we mainly refer to \cite[Subsection 8.1]{GMV}. First of all, we observe that the general solution of the differential equation 
\begin{equation*}\label{ee54}
\phi''(r) = C_1 \, r^{-2} \, \phi(r)  \qquad \forall  r \in \mathbb{R}^+ 
\end{equation*}
is explicit, namely
\begin{equation}\label{ee55}
\phi(r)=a_1 \, r^{q_1} + a_2 \, r^{q_2} \qquad \forall r \in \mathbb{R}^+
\end{equation}
for arbitrary real constants $a_1$ and $a_2$, where $q_{1,2}=(1\pm\sqrt{1+4C_1})/2$. It is not difficult to show (by arguing as in \cite[Subsection 8.1]{GMV}) that one can construct a function $ \psi \in \mathcal{A} $ such that 
\begin{equation*}\label{ee86}
\begin{cases}
\psi''(r) = 0 & \forall r \in [0,R_0] \, , \\
\psi''(r) \le C_1 \, r^{-2} \, \psi(r)  &  \forall r \in (R_0,2R_0) \, , \\ 
\psi''(r) = C_1 \, r^{-2} \, \psi(r) &  \forall r \ge 2 R_0  \, , \\
\end{cases}
\end{equation*}
which therefore complies with \eqref{ee55} for every $ r \ge 2R_0 =: r_0 $ and constants $ a_1 > 0 $, $ a_2 \in \mathbb{R} $ depending only on $C_1,R_0$. In view of \eqref{sezionale2}, we are in position to apply the Laplacian-comparison results of Subsection \ref{lc} (specifically \eqref{comp-sect-2}), guaranteeing that 
\begin{equation}\label{stimasum}
\mathsf{m}(r,\theta) \geq (N-1) \, \dfrac{\psi'(r)}{\psi(r)} \geq (N-1) \left(\dfrac{q_1}{r}-\dfrac{h}{r^{1+\sqrt{1+4C_1}}}\right) \qquad \forall (r,\theta) \in [r_0,\infty) \times \mathbb{S}^{N-1}  \, ,
\end{equation}
where $h$ is a suitable positive constant depending on $ a_1,a_2,q_1,q_2,r_0 $. Thanks to \eqref{stimasum}, by proceeding as in \eqref{psi-ast-e2}--\eqref{psi-ast-e3} we end up with
$$
\frac{\psi_\star'(r)}{\psi_\star(r)} \ge \dfrac{q_1}{r}-\dfrac{h}{r^{1+\sqrt{1+4C_1}}} \qquad \forall r\geq r_{0} \, ,
$$
so that the function $ \psi_\star $ satisfies the hypotheses of Lemma \ref{lem-psi2} with $c=q_1$, $c^\prime=h$ and $ q = 1+\sqrt{1+4C_1} $. As a consequence, we deduce that
\begin{equation}\label{sup esplicito 2}
\sup_{r\in(0,\infty)} \left(\int_0^r \psi_\star(s)^{N-1} \, ds\right)^{\frac{1}{p}}\left(\int_r^\infty \dfrac{1}{\psi_\star(s)^{N-1}} \, ds \right)^{\frac 1 2} \le {C} \, \sqrt{p} \qquad \forall p \in \left[\tilde{2}, 2^\ast\right) ,
\end{equation}
where $ \tilde{2} $ is defined in \eqref{tsfinaleX} and $C$ is a positive constant as in the statement. In view of \eqref{sup esplicito 2} the thesis follows as in the proof of Theorem \ref{teo} upon applying Proposition \ref{thm:ko}. We finally establish optimality. To this aim, it is enough to consider any function $\psi\in\mathcal{A}$ such that 
\begin{equation*}\label{opt-77}
\psi''(r) \ge 0 \quad \forall r>0 \qquad \text{and} \qquad \psi(r) = r^{\frac{\tilde{N}-1}{N-1}} \quad \text{for large } r \, ,
\end{equation*} 
where $ \tilde{N} $ is related to $ C_1 $ through \eqref{tsfinaleX}. This makes sure that the associated model manifold $\mathbb{M}^N_\psi$ is Cartan-Hadamard and complies with \eqref{sezionale2}. Recalling that $\tilde{N}-2>0$, we therefore obtain
\begin{equation*}\label{ee101}
\left(\int_r^\infty \dfrac{1}{\psi(s)^{N-1}}\, ds\right)^\frac{1}{2} = \frac{r^{-\frac{\tilde{N}-2}{2}}}{\sqrt{\tilde{N}-2}} \qquad \text{and} \qquad \left(\int_{0}^r \psi(s)^{N-1} \, ds\right)^\frac{1}{p} \ge \frac{r^{\frac{\tilde{N}}{p}}}{\big(2\tilde{N}\big)^{\frac1p}}    \qquad \text{for large } r \, ,
\end{equation*}
whence
\begin{equation}\label{e103}
\left(\int_{0}^r \psi(s)^{N-1} \, ds\right)^\frac{1}{p} \left(\int_r^\infty \dfrac{1}{\psi(s)^{N-1}} \, ds\right)^\frac{1}{2} \ge \frac{r^\frac{2 \tilde N - p \tilde N + 2p}{2p}}{\sqrt{\tilde{N}-2}\,\big(2\tilde{N}\big)^{\frac1p}}   \qquad \text{for large } r \, .
\end{equation}
Clearly the r.h.s.~of \eqref{e103} stays bounded as $ r \to \infty $ if and only if $ 2 \tilde N - p \tilde N + 2p \le 0 $, namely $ p \ge \tilde{2} $. Hence, thanks to Proposition \ref{thm:ko}, we can assert that in this case \eqref{tsfinale2} fails for all $ p \in [2,\tilde{2}) $. As concerns the behavior of the optimal constant as $ p \to \infty $ (for $N=2$), one reasons exactly as in the end of the proof of Theorem \ref{teo}.
\end{proof}

\section{Failure of the inequalities for nonradial functions}\label{sect:nonrad} 

In this section we prove Theorem \ref{Controex teo} by constructing an explicit sequence of nonradial functions that make the Rayleigh quotient associated with inequality \eqref{sob-one-nnrad proof} blow up.

\begin{proof}[Proof of Theorem \ref{Controex teo}] 
Since the Ricci curvature of $ \mathbb{M}^N $ is everywhere nonpositive, in view of \eqref{ricci-chch tesi} there exists a nondecreasing and positive function function $ G:\mathbb{R}^+ \to \mathbb{R}^+ $, with $  \lim_{R \to \infty} G(R) = \infty  $, such that for every $ R>0 $ we have 
\begin{equation}\label{bound-uniform-ricci}
\mathrm{Ric}(x) \ge - \frac{N-1}{G(R)^2} \qquad \forall x \in \mathbb{M}^N \setminus  B_R \, .
\end{equation}
Let us set
\begin{equation}\label{fR-dist proof} 
f_R(x) := \left( 1-\frac{\operatorname{d}(x,o_R)}{{G(R)}} \right)_+ \qquad \forall x \in \mathbb{M}^N \, ,
\end{equation}
$ o_R \in \mathbb{M}^N $ being any point that satisfies $ \operatorname{d}(o_R,o) = R + {G(R)}  $. Thanks to \eqref{bound-uniform-ricci}, we deduce that  
\begin{equation}\label{bound-uniform-R}
\mathrm{Ric}(x) \ge - \frac{N-1}{G(R)^2} \qquad \forall x \in B_{{G(R)}}(o_R) \, ,
\end{equation}
where $ B_r(o_R) $ stands for the geodesic ball of radius $r>0$ centered at $o_R$. Note that $ f_R $ is in fact only Lipschitz regular, but this is not an issue (recall the discussion in Subsection \ref{ext}). In view of \eqref{fR-dist proof}, it follows that
\begin{equation}\label{fR-dist-1 proof} 
\left| \nabla f_R(x) \right| = \frac{1}{{G(R)}} \, \chi_{B_{{G(R)}}(o_R)}(x) \qquad \text{and}  \qquad \left| f_R(x) \right| \ge \frac{1}{2} \, \chi_{B_{\frac{G(R)}{2}}(o_R)}(x) \qquad \forall x \in \mathbb{M}^N \, .
\end{equation} 
In particular, the $ L^p $ norm of $ f_R $ is readily estimated from below: 
\begin{equation}\label{Lp-below proof}
\left\| f_R \right\|_{L^p\left( \mathbb{M}^N \right)}^p = \int_{\mathbb{M}^N} \left| f_R \right|^p d\nu \ge \frac{1}{2^p} \, \nu\!\left( B_{\frac{G(R)}{2}}(o_R) \right) \ge \frac{\omega_{N-1}}{2^{p+N} \, N} \, G(R)^{N} \, ,
\end{equation}
where in the last inequality we have used the simple fact that, because $ \mathbb{M}^N $ is Cartan-Hadamard, the volume growth of balls (w.r.t.~any pole) is at least Euclidean. This can be seen, for instance, as a direct consequence of Laplacian comparison, due to \eqref{meas} and \eqref{lap-euc}. As concerns the $ L^2 $ norm of the gradient, from \eqref{fR-dist-1 proof} we have
\begin{equation}\label{L2-below proof}
\left\| \nabla f_R \right\|_{L^2\left( \mathbb{M}^N \right)}^2 = \int_{\mathbb{M}^N} \left| \nabla f_R \right|^2 d\nu = \frac{1}{G(R)^2}  \, \nu\!\left( B_{G(R)}(o_R) \right) .
\end{equation}
By virtue of \eqref{bound-uniform-R} and again Laplacian comparison (or more in general Bishop-Gromov, see \cite[Theorem 1.1]{Hebey}), we can assert that the volume of $ B_{G(R)}(o_R) $ is bounded from above by the volume of the geodesic ball of the same radius in the hyperbolic space of constant Ricci curvature $ -(N-1)/G(R)^2  $, which corresponds to the model function
$$
\psi(r)= G(R) \, \sinh\!\left(\frac{{r}}{G(R)}  \right) .
$$
As a result, 
\begin{equation}\label{ricci-BV proof} 
\nu\!\left( B_{G(R)}(o_R) \right)\le \omega_{N-1} \, G(R)^{N-1} \int_0^{G(R)} \sinh \! \left( \tfrac{r}{G(R)} \right)^{N-1} dr = \omega_{N-1} \, G(R)^N \int_0^{1} \sinh(s)^{N-1} \, ds \, .
\end{equation} 
By combining \eqref{L2-below proof} and \eqref{ricci-BV proof}, we obtain: 
\begin{equation}\label{L2-below-bis proof}
\left\| \nabla f_R \right\|_{L^2\left( \mathbb{M}^N \right)}^2 \le  \omega_{N-1} \int_0^{1} \sinh(s)^{N-1} \, ds \ G(R)^{N-2} =: \omega_{N-1} \, a_N \, G(R)^{N-2}   \, .
\end{equation} 
If \eqref{sob-one-nnrad proof} was true, thanks to \eqref{Lp-below proof} and \eqref{L2-below-bis proof} we would end up with 
$$
\left( \frac{\omega_{N-1}}{2^{p+N} \, N} \, G(R)^{N} \right)^{\frac 1p} \le C \left(  \omega_{N-1} \, a_N \, G(R)^{N-2}  \right)^{\frac{1}{2}} ,
$$
namely
$$
G(R)^{2 N \left(  \frac1p-\frac1{2^\ast} \right) } \le 4^{\frac{p+N}{p}} N^{\frac 2 p} \, \omega_{N-1}^{\frac{p-2}{p}} \, a_N \, , 
$$
and the contradiction is achieved upon letting $ R \to \infty $, since $ p < 2^\ast $ by assumption.
\end{proof}

\section{The porous medium equation on Cartan-Hadamard model manifolds}\label{PME-application} 

Theorem \ref{teo} has some interesting consequences concerning \emph{smoothing effects} for problem \eqref{PME}, at least when the initial datum belongs to $ L^1(\mathbb{M}^N) $, is radially symmetric w.r.t.~the pole $ o $ and the manifold at hand is a {model} (so that radiality is preserved along the evolution). Below we let $ L^1_{\mathrm{rad}}\big(\mathbb{M}^N_\psi\big) $ denote the space formed by all such functions. 

\begin{theorem}\label{thm: sobo-deg}
Let $ \mathbb{M}^N_\psi $ be a Cartan-Hadamard model manifold satisfying
\begin{equation}\label{sezionale-smooth} 
\mathrm{Sect}_\omega(x) = \frac{1}{N-1} \, \mathrm{Ric}_o(x) = - \frac{\psi^{\prime\prime}(r)}{\psi(r)} \leq - C_0 \, r^{-\beta} \qquad \forall x \in \mathbb{M}^N_\psi \setminus B_{R_0} \, ,
\end{equation}
for some $ \beta \in (0,2) $ and $C_0,R_0>0$. Then there exists $ K>0 $, depending only on $ m, N , \beta , C_0 , R_0$, such that for all $ u_0 \in L^1_{\mathrm{rad}}\big(\mathbb{M}^N_\psi\big) $ the solution $u$ of \eqref{PME} fulfills the smoothing estimate
\begin{equation}\label{eq: log-alfa}
\left\| u(\cdot,t) \right\|_{L^\infty(\mathbb{M}^N_\psi)} \le K \left[ \log\!\left(t \left\| u_0 \right\|_{L^1(\mathbb{M}^N_\psi)}^{m-1} + e \right) \right]^{\frac{2+\beta}{(m-1)(2-\beta)}}  t^{-\frac{1}{m-1}}  \qquad \forall t>0 \, .
\end{equation}
The result is optimal w.r.t.~long-time behavior, in the sense that if \eqref{sezionale-smooth} holds with reverse inequality 
then there exist initial data $ u_0 \in L^1_{\mathrm{rad}}\big(\mathbb{M}^N_\psi\big) $ for which \eqref{eq: log-alfa} holds with reverse inequality for large $t$. 
\end{theorem}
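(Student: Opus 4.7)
The plan is to combine the preservation of radial symmetry under the PME flow on model manifolds with Theorem \ref{teo} and the abstract smoothing machinery of \cite{GM16}, then prove optimality by comparing with the barrier-type solutions of \cite{GMV}.

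\emph{Step 1 (radial reduction).} On $\mathbb{M}^N_\psi$ the metric \eqref{model metric-intro} is spherically symmetric about $o$, hence the Laplace-Beltrami operator commutes with the natural $O(N)$-action fixing $o$. By uniqueness of bounded solutions of \eqref{PME}, if $u_0 \in L^1_{\mathrm{rad}}(\mathbb{M}^N_\psi)$ then $u(\cdot,t)$ is radial for every $t>0$. Thus, after standard approximation by compactly supported radial functions, Theorem \ref{teo} is applicable to $u(\cdot,t)$ with the explicit $p$-dependent constant \eqref{bhv-cp}.

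\emph{Step 2 (smoothing from the family of inequalities).} The standard $L^p$ energy identity for the PME yields, for $p > 1$,
\begin{equation*}
\frac{d}{dt}\|u(\cdot,t)\|_{L^p(\mathbb{M}^N_\psi)}^p = -\frac{4mp(p-1)}{(m+p-1)^2} \left\| \nabla u^{(m+p-1)/2}(\cdot,t) \right\|_{L^2(\mathbb{M}^N_\psi)}^2 .
\end{equation*}
Applying \eqref{tsfinale} to the radial function $u^{(m+p-1)/2}(\cdot,t)$ with a suitable Sobolev exponent $q \in (2,2^\ast]$, combined with $L^r$-interpolation anchored at the $L^1$-contraction bound $\|u(\cdot,t)\|_{L^1} \le \|u_0\|_{L^1}$, produces a Nash-type closed ODE inequality for $y(t):=\|u(\cdot,t)\|_p^p$ whose only $p$-dependent coefficient is encoded by the constant $C_q$ from \eqref{bhv-cp}. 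Integrating the ODE and then letting $p\to\infty$ along a geometric Moser sequence $p_k$ gives an $L^1$-$L^\infty$ bound of the form $K(p,t)\,t^{-1/(m-1)}$ with $K(p,t)$ increasing in $p$. The final optimization $p \sim \log\!\big(t\|u_0\|_1^{m-1}+e\big)$, dictated by balancing the explicit $p$-powers in \eqref{bhv-cp}, produces the logarithmic correction with exponent $(2+\beta)/[(m-1)(2-\beta)]$, yielding \eqref{eq: log-alfa}. This is precisely the abstract translation of a family of Sobolev-type inequalities into an $L^1$-$L^\infty$ smoothing effect developed in \cite{GM16}, specialized to the constants \eqref{bhv-cp}.

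\emph{Step 3 (optimality).} Suppose \eqref{sezionale-smooth} holds with reversed inequality, so the curvature is bounded from below by $-C_0 r^{-\beta}$ at infinity. Then the Barenblatt-type subsolution constructed in \cite{GMV}, namely
\begin{equation*}
U(x,t) = t^{-\frac{1}{m-1}} \Big[ \gamma \left(\log t\right)^{\frac{2+\beta}{2-\beta}} - r^{\frac{2+\beta}{2}} \Big]_+^{\frac{1}{m-1}}
\end{equation*}
for a suitable $\gamma>0$, is an admissible subsolution on $\mathbb{M}^N_\psi$ with compactly supported initial profile. Choosing $u_0 \ge U(\cdot,t_0)$ (which is possible by scaling) and invoking the standard PME comparison principle gives
\begin{equation*}
\|u(\cdot,t)\|_{L^\infty(\mathbb{M}^N_\psi)} \ge U(o,t) \gtrsim t^{-\frac{1}{m-1}} \left(\log t\right)^{\frac{2+\beta}{(m-1)(2-\beta)}} \qquad \text{as } t \to \infty ,
\end{equation*}
matching the upper bound in \eqref{eq: log-alfa} up to absolute constants.

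\emph{Main obstacle.} The delicate point is Step 2: one must carefully track the $p$-dependence of the constants throughout the Moser-type iteration so that the sharp log-exponent $(2+\beta)/[(m-1)(2-\beta)]$ emerges, rather than a cruder exponent coming from rougher estimates. It is precisely the explicit form \eqref{bhv-cp} of $C_p$ — and not just its finiteness — that makes this sharp smoothing estimate possible; this is why the optimality statement in Theorem \ref{teo} is essential here.
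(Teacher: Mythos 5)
Your overall structure — radial preservation, energy identity, interpolation against the conserved $L^1$ norm, a closed ODE, and optimization of a Sobolev exponent on the scale $\sim\log t$ — captures exactly the ingredients the paper uses, and your Step 3 is correct (the paper simply cites \cite[Theorem 3.2]{GMV} for the lower bound \eqref{esB}, whose proof is indeed the barrier argument you sketch). Where your route genuinely diverges is Step 2. You propose a full Moser iteration in the energy exponent $p$, with $p_k\to\infty$, and then ``optimize over $p$'' at the end — but after an iteration with $p_k\to\infty$ there is no residual $p$ left to tune, so the parameter you actually optimize must be the Sobolev exponent entering each step, and tracking how the explicit constants \eqref{bhv-cp} accumulate across infinitely many iterations to yield the sharp power $(2+\beta)/[(m-1)(2-\beta)]$ is nontrivial and is not carried out. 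The paper instead uses a \emph{two-step} argument that avoids this bookkeeping entirely: it integrates the single $L^{q+1}$ energy ODE (using the Sobolev inequality \eqref{tsfinale} with exponent $p=2\sigma$) to get an $L^1\to L^{q+1}$ bound \eqref{eq:log-gamma-4}, then invokes a pre-existing $L^{q+1}\to L^\infty$ smoothing (from \cite{GM13,BG05}, valid once \eqref{tsfinale} holds for a single \emph{fixed} exponent $\sigma_0$) and splices the two via the semigroup property at time $t/2$. The log correction then emerges from a \emph{joint} optimization of both free parameters, $q=\log(t+e)$ and $\sigma=1+(\sigma_0-1)/\log(t+e)$, which your sketch collapses into one. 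Your strategy is plausible but would require you to verify that the iterated constants do not degrade the log exponent; the paper's route makes this verification unnecessary, since the $p$-dependence of $C_p$ enters the argument only once.
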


The strategy of proof follows the lines of \cite[Theorem 3.1]{GM16} and \cite[Theorem 3.2]{GMV}. For the reader's convenience here we write down a concise argument, with the purpose to allow one to realize how the Sobolev-type inequalities \eqref{tsfinale} come into play. 

\begin{proof}
Let $ q >0  $ and $ \sigma = p/2 \in (1 , 2^\ast/2 ]$, both being for the moment free (but fixed) parameters. We can suppose with no loss of generality that $ u_0 \in L^1\big(\mathbb{M}^N_\psi\big) \cap L^\infty\big(\mathbb{M}^N_\psi\big) $. In order to make rigorous the computations we will carry out, some approximation procedures are necessary, which however will be skipped since they are out of the scope of this section: we refer e.g.~to \cite{GM13,GMP13,GM16} for more details. To improve readability, throughout $ \| \cdot \|_q $ will stand for $ L^q\big(\mathbb{M}^N_\psi\big) $ norms. First of all, we multiply the differential equation in \eqref{PME} by $ u(\cdot,t)^q $, integrate by parts and use \eqref{tsfinale} (note that if $ u_0 $ is radial  so is $ u(\cdot,t) $). This yields
\begin{equation}\label{eq:log-gamma-1}
\frac{d}{d t} \|u(\cdot,t)\|_{q+1}^{q+1} = - \frac{4q(q+1)m}{(m+q)^2} \left\| \nabla\!\left(u^{\frac{q+m}{2}}\right)\!(\cdot,t) \right\|_2^2 \le -\frac{4q(q+1)m}{(m+q)^2\,C_\sigma^2} \, \|u(\cdot,t)\|_{\sigma(q+m)}^{q+m} \, ,
\end{equation}
where we set
\begin{equation}\label{eq:log-gamma-xx}
C_\sigma = C_{p/2} := \frac{C \, p^{\frac{2+\beta}{2(2-\beta)}}}{(p-2)^{\frac{\beta}{2-\beta}}} \, ,
\end{equation}
$C$ being the same constant as in \eqref{tsfinale}. Upon taking advantage of standard interpolation and the well-known fact that the $ L^1\big(\mathbb{M}^N_\psi\big) $ norm does not increase along the evolution, we infer that
\begin{equation}\label{eq:log-gamma-2}
\| u(\cdot,t) \|_{q+1} \le \| u(\cdot,t) \|_{\sigma(q+m)}^{\frac{\sigma(q+m)q}{[\sigma(q+m)-1](q+1)}} \, \| u_0 \|_{1}^{\frac{\sigma(q+m)-(q+1)}{[\sigma(q+m)-1](q+1)}} \qquad \forall t>0 \, .
\end{equation}
For simplicity, let us assume $ \| u_0 \|_1=1 $ (the general case can be handled by a routine time-scaling argument). Hence, as a consequence of \eqref{eq:log-gamma-1} and \eqref{eq:log-gamma-2}, it follows that
\begin{equation}\label{eq:log-gamma-3}
\frac{d}{dt} \|u(\cdot,t)\|_{q+1}^{q+1} \le -\frac{4q(q+1)m}{(m+q)^2\,C_\sigma^2} \, \|u(	\cdot,t)\|_{q+1}^{(q+1)\frac{\sigma(q+m)-1}{\sigma q}} \, .
\end{equation}
The integration of \eqref{eq:log-gamma-3} entails 
$$ 
y(t)^{\frac{\sigma m -1}{\sigma q}} \le \left[ {y(0)^{-\frac{\sigma m -1}{\sigma q}}} + \tfrac{4m(q+1)(\sigma m -1)}{\sigma (q+m)^2 \, C_\sigma^2 }  \, t \right]^{-1}    \quad \forall t>0 \, , \qquad   y(t):=\| u(\cdot,t) \|_{q+1}^{q+1} \, , 
$$
whence 
\begin{equation}\label{eq:log-gamma-4}
\| u(\cdot,t) \|_{q+1} \le \left[ \frac{\sigma (q+m)^2 \, C_\sigma^2 }{4m(q+1)(\sigma m -1)} \right]^{\frac{\sigma q}{(q+1)(\sigma m -1)}} t^{-\frac{\sigma q}{(q+1)(\sigma m -1)}} \qquad \forall t>0 \, .
\end{equation}
By previous results (see e.g.~\cite[Corollary 5.6]{GM13} or \cite[Theorem 1.5]{BG05}), the validity of \eqref{tsfinale} for a fixed $ p/2 = \sigma_0 \in  (1 , 2^\ast/2 ) $ implies the smoothing estimate 
\begin{equation}\label{eq:log-gamma-5}
\left\| u(\cdot,t) \right\|_{\infty} \leq  K \, t^{-\frac{\sigma_0}{(\sigma_0-1)(q+1)+\sigma_0(m-1)}} \left\|  u_0 \right\|_{q+1}^{\frac{(\sigma_0-1)(q+1)}{(\sigma_0-1)(q+1)+\sigma_0(m-1)}} \qquad \forall t>0 \, ,
\end{equation}
where from here on we let $ K $ denote a general positive constant that depends on $ m, N , \beta , C_0 , R_0$ (which will not be relabeled). Therefore, the combination of \eqref{eq:log-gamma-4} (evaluated at time $t/2$) and \eqref{eq:log-gamma-5} with time origin shifted from $ 0 $ to $t/2$ (semigroup property) yields
\begin{equation}\label{eq:log-gamma-6}
\left\| u(\cdot,t) \right\|_{\infty} \leq  K \left[ \frac{\sigma (q+m)^2 \, C_\sigma^2 }{4m(q+1)(\sigma m -1)} \right]^{\frac{\sigma q ({\sigma_0}-1)}{ (\sigma m -1)[({\sigma_0}-1)(q+1)+{\sigma_0}(m-1) ] }}  \, t^{-\frac{{\sigma_0}(\sigma m -1)+\sigma q ({\sigma_0}-1)}{ (\sigma m -1)[({\sigma_0}-1)(q+1)+{\sigma_0}(m-1) ] }} 
\end{equation}
for all $ t>0 $. Because $ q>0 $ is a free parameter, we can let $ q =\log(t+e) $ in \eqref{eq:log-gamma-6} so as to obtain
\begin{equation}\label{eq:log-gamma-7}
\begin{aligned}
\left\| u(\cdot,t) \right\|_{\infty} \leq & \, K \left\{ \frac{\sigma  [\log(t+e)+m ]^2}{4m[\log(t+e)+1](\sigma m -1)} \right\}^{-\frac{\sigma({\sigma_0}m-1)}{(\sigma m-1)\{ ({\sigma_0}-1)[\log(t+e)+1] + {\sigma_0}(m-1) \}}} \\
& \times \, t^{\frac{{\sigma_0}-\sigma}{(\sigma m-1)\{({\sigma_0}-1)[\log(t+e)+1]+{\sigma_0}(m-1)\}}} \left\{ \frac{\sigma [1+m/\log(t+e)]^2}{4m[1+1/\log(t+e)](\sigma m -1)} \right\}^{\frac{\sigma}{\sigma m-1}}
& \\
& \times \, C_\sigma^{-\frac{2\sigma({\sigma_0}m-1)}{(\sigma m-1)\{ ({\sigma_0}-1)[\log(t+e)+1] + {\sigma_0}(m-1) \}}} \left[ \log(t+e) \, C_\sigma^2 \right]^{\frac{\sigma}{\sigma m-1}} t^{-\frac{\sigma}{\sigma m-1}} \qquad \forall t>0 \, .
\end{aligned}
\end{equation}
If $ \sigma \in (1,\sigma_0) $, it is apparent that the first two factors in the r.h.s.~of \eqref{eq:log-gamma-7} can be bounded from above by another general positive constant $ K $, whence
\begin{equation*}\label{eq:log-gamma-8}
\left\| u(\cdot,t) \right\|_{\infty} \leq  K \, C_\sigma^{-\frac{2\sigma({\sigma_0}m-1)}{(\sigma m-1)\{ ({\sigma_0}-1)[\log(t+e)+1] + {\sigma_0}(m-1) \}}} \left[ \log(t+e) \, C_\sigma^2 \right]^{\frac{\sigma}{\sigma m-1}} t^{-\frac{\sigma}{\sigma m-1}} \qquad \forall t>0 \, ,
\end{equation*}
which can be rewritten as (recall \eqref{eq:log-gamma-xx})
\begin{equation}\label{eq:log-gamma-9}
\left\| u(\cdot,t) \right\|_{\infty} \leq  K \, (\sigma-1)^{\frac{2\beta \sigma({\sigma_0}m-1)}{(2-\beta)(\sigma m-1)\{ ({\sigma_0}-1)[\log(t+e)+1] + {\sigma_0}(m-1) \}}} \left[ \log(t+e) \left(\sigma-1\right)^{-\frac{2\beta}{2-\beta}} \right]^{\frac{\sigma}{\sigma m-1}} t^{-\frac{\sigma}{\sigma m-1}} 
\end{equation}
for all $t>0$. We can now let also $ \sigma $ vary upon setting  
$$ \sigma = 1 + \frac{{\sigma_0}-1}{\log(t+e)} \, , $$
so that \eqref{eq:log-gamma-9} (using the boundedness of the first factor) yields
\begin{equation*}\label{eq:log-gamma-10}
\begin{aligned}
\left\| u(\cdot ,t) \right\|_{\infty} \leq & \, K \left[ \log(t+e) \right]^{\frac{2+\beta}{(m-1)(2-\beta)} - \frac{(2+\beta)({\sigma_0}-1)}{[(m-1)\log(t+e)+m({\sigma_0}-1) ](m-1)(2-\beta)} } \\
& \times   t^{-\frac{1}{m-1} + \frac{{\sigma_0}-1}{[(m-1)\log(t+e)+m({\sigma_0}-1) ](m-1)}} \qquad \forall t>0 \, ,
\end{aligned}
\end{equation*}
namely
\begin{equation*}\label{eq:log-gamma-11}
\left\| u(\cdot,t) \right\|_{\infty} \leq  K \left[ \log(t+e) \right]^{\frac{2+\beta}{(m-1)(2-\beta)}} t^{-\frac{1}{m-1} } \qquad \forall t>0 \, ,
\end{equation*}
which is precisely \eqref{eq: log-alfa} in the case $ \| u_0 \|_1 = 1 $. As for optimality, it is enough to invoke \cite[Theorem 3.2]{GMV}: if the curvature assumption \eqref{sezionale-smooth} holds with reverse inequality, then all nontrivial, bounded, compactly supported and nonnegative initial datum $ u_0 $ gives rise to a solution of \eqref{PME} satisfying (in particular) the lower bound
\begin{equation}\label{esB}
\left\| u(\cdot,t) \right\|_\infty^{m-1} \ge \hat{K} \, \frac{\left( \log t \right)^{\frac{2+\beta}{2-\beta}}}{t} \qquad \text{for large } t \, ,
\end{equation}
where $ \hat{K} $ is a suitable positive constant depending only on $ m,N,\psi, u_0 $. It is plain that \eqref{esB} matches \eqref{eq: log-alfa} from below with respect to long-time behavior, up to constants.
\end{proof}

\begin{remark}\rm
We point out that Theorem \ref{thm: sobo-deg}  holds for $ \beta=0 $ as well: in fact in such case the result is true for all $ L^1(\mathbb{M}^N) $ initial data, not only the radial ones, and $ \mathbb{M}^N $ need not be a model manifold. This is a direct consequence of Theorem \ref{mckball} below and \cite[Theorem 2.1]{GM16}, whereas optimality is due to the sharp estimates of \cite{VazHyp}. 
\end{remark}

In the quasi-Euclidean case, thanks to Theorem \ref{teo2} we can obtain the analogue of Theorem \ref{thm: sobo-deg}. The proof, that we omit, follows by combining \cite[Corollary 5.6]{GM13} (the fact that it is stated on Euclidean domains is irrelevant) and \cite[Theorem 6.2]{GMV}.
\begin{theorem}\label{thm: sobo-deg-quasi}
Let $ \mathbb{M}^N_\psi $ be a Cartan-Hadamard model manifold satisfying 
\begin{equation}\label{sezionale2-quasi} 
\mathrm{Sect}_\omega(x)  = \frac{1}{N-1} \, \mathrm{Ric}_o(x) = - \frac{\psi^{\prime\prime}(r)}{\psi(r)} \leq - C_1 \, r^{-2} \qquad \forall x \in \mathbb{M}^N_\psi \setminus B_{R_0} \, ,
\end{equation}
for some $ C_1 , R_0>0$. Then there exists $ K>0 $, depending only on $ N , C_1 , R_0$, such that for all $ u_0 \in L^1_{\mathrm{rad}}\big(\mathbb{M}^N_\psi\big) $ the solution $u$ of \eqref{PME} fulfills the smoothing estimate
\begin{equation}\label{eq: log-alfa-quasi}
\left\| u(\cdot,t) \right\|_{L^\infty(\mathbb{M}^N_\psi)} \le K \, t^{-\frac{\tilde{N}}{2 + \tilde{N}^{{\phantom{a}}^{\phantom{a}}}\!\!\!\!\!\!\!(m-1)}} \left\| u_0 \right\|_{L^1(\mathbb{M}^N_\psi)}^{\frac{2}{2 + \tilde{N}^{{\phantom{a}}^{\phantom{a}}}\!\!\!\!\!\!\!(m-1)}}  \qquad \forall t>0 \, ,
\end{equation}
where $ \tilde{N} $ is defined in \eqref{tsfinaleX}. The result is optimal w.r.t.~long-time behavior, in the sense that if \eqref{sezionale2-quasi} holds with reverse inequality then there exist initial data $ u_0 \in L^1_{\mathrm{rad}}\big(\mathbb{M}^N_\psi\big) $ for which \eqref{eq: log-alfa-quasi} holds with reverse inequality for large $t$.
\end{theorem}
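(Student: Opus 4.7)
The plan is to combine the radial Sobolev-type inequality from Theorem \ref{teo2} with a standard Moser-type smoothing scheme for degenerate parabolic equations, exactly as was done in the proof of Theorem \ref{thm: sobo-deg} but with the effective dimension $\tilde{N}$ in place of the sub-hyperbolic logarithmic corrections. Since $\mathbb{M}^N_\psi$ is a model and $u_0$ is radial, the solution $u(\cdot,t)$ of \eqref{PME} remains radial at all times; by a standard approximation procedure one may also assume $u_0 \in L^1 \cap L^\infty$ and smooth, so that all the subsequent integrations by parts are rigorous.

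The first step would be to apply Theorem \ref{teo2} at the single exponent $p = \tilde{2}$ (so $\sigma_0 := \tilde{N}/(\tilde{N}-2) = \tilde{2}/2$), which provides a constant $C$ depending only on $N, C_1, R_0$ such that
\begin{equation*}
\|f\|_{L^{\tilde{2}}(\mathbb{M}^N_\psi)} \le C \, \|\nabla f\|_{L^2(\mathbb{M}^N_\psi)} \qquad \forall f \in C^1_{c:\mathrm{rad}}(\mathbb{M}^N_\psi).
\end{equation*}
Multiplying the equation in \eqref{PME} by $u(\cdot,t)^q$ (any $q>0$), integrating over $\mathbb{M}^N_\psi$ and using the above inequality applied to $u(\cdot,t)^{(q+m)/2}$ — which is radial and compactly supported after the approximation — yields the differential inequality
\begin{equation*}
\frac{d}{dt}\|u(\cdot,t)\|_{q+1}^{q+1} \le -\frac{4q(q+1)m}{(q+m)^2 C^2} \, \|u(\cdot,t)\|_{\sigma_0(q+m)}^{q+m},
\end{equation*}
and combined with the $L^1$ contraction plus interpolation, a Moser-type iteration in $q$ produces the $L^1$-$L^\infty$ smoothing estimate with Euclidean-type exponents driven by $\sigma_0$, i.e.~by $\tilde{N}$. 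Rather than redo this iteration by hand, one invokes the abstract result \cite[Corollary 5.6]{GM13} (whose proof is purely functional-analytic and needs only a Sobolev-type inequality of the form $\|f\|_{2\sigma_0} \le C \|\nabla f\|_2$ on the class of admissible test functions — here, radial ones on the model): it returns precisely \eqref{eq: log-alfa-quasi}, with $K$ depending only on $N, C_1, R_0$.

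For the optimality statement, the task is to produce, under the reverse curvature bound in \eqref{sezionale2-quasi}, nonnegative compactly supported radial initial data whose solution saturates the upper bound at large times. This is provided directly by \cite[Theorem 6.2]{GMV}, which yields a matching lower bound $\|u(\cdot,t)\|_\infty^{m-1} \gtrsim t^{-\tilde{N}(m-1)/[2+\tilde{N}(m-1)]}$ for large $t$ (up to the $L^1$ norm factor). The main subtlety — and the place where one has to be careful — is verifying that the abstract smoothing machinery of \cite{GM13}, which is stated for the Neumann problem on Euclidean domains, transfers verbatim to the present weighted one-dimensional radial setting on the model; this is the case because the iteration only uses integration by parts for radial $C^1$ functions (which is lawful in view of the identity \eqref{LP}) together with the Sobolev-type inequality, so the construction is insensitive to the underlying geometric realization. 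All remaining steps are routine approximation and time-scaling arguments identical to those sketched in the proof of Theorem \ref{thm: sobo-deg}.
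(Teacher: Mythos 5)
Your proposal is correct and follows essentially the same route as the paper: the paper itself only gestures at the proof, stating that it "follows by combining \cite[Corollary 5.6]{GM13} (the fact that it is stated on Euclidean domains is irrelevant) and \cite[Theorem 6.2]{GMV}," and you have accurately identified both ingredients — Theorem \ref{teo2} at the single exponent $p=\tilde{2}$ feeding into the abstract $L^1$–$L^\infty$ smoothing machinery of \cite{GM13}, with \cite[Theorem 6.2]{GMV} supplying the matching long-time lower bound — together with the correct observation that, unlike in Theorem \ref{thm: sobo-deg}, here the exponent is bounded away from $2$ so no logarithmic correction and no $\sigma$-optimization is needed. The remark about transferring \cite[Corollary 5.6]{GM13} to the weighted radial one-dimensional setting via the integration-by-parts identity \eqref{LP} is exactly the point the authors compress into the parenthetical "irrelevant."
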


\appendix
\section{The Poincar\'e inequality}\label{sec-mckean}

As discussed in the Introduction, one of our main motivations was a remarkable paper by H.P. McKean, which is fully devoted to the proof of the following result. 

\begin{theorem}[{\cite[Statement on page 360]{McKean}}]\label{mck-orig} 
	Consider a smooth, $N$-dimensional, simply-connected Riemannian manifold $ {M} $ with negative sectional curvatures $ \mathrm{Sect} $ bounded away from $0$: specifically, suppose $ \mathrm{Sect} \le -k $ for some constant $k > 0$. Then the spectrum of the corresponding Laplace-Beltrami operator $\Delta$ acting in $ L^2 ({M}) $ is also bounded from $0$: specifically, the top of the spectrum lies to the left of
	\begin{equation*}
	-\dfrac{k \left( N-1 \right)^2 }{4} \, ,
	\end{equation*}
	and this bound is sharp.
\end{theorem}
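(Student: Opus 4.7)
The plan is to recover McKean's inequality via the classical Laplacian-comparison plus integration-by-parts argument, taking full advantage of the tools already collected in Subsection~\ref{lc}. Since $M$ is simply connected with nonpositive sectional curvature, it is a Cartan-Hadamard manifold, so fixing any pole $o \in M$ the distance function $r(x) := \operatorname{d}(x,o)$ is smooth on $M \setminus \lbrace o \rbrace$ and has gradient of unit length there. The model with $\psi(r) = \sinh(\sqrt{k}\,r)/\sqrt{k}$ saturates the curvature bound $\psi''/\psi = k$, hence by the Laplacian-comparison estimate \eqref{comp-sect-2} we obtain
\begin{equation*}
\mathsf{m}(r,\theta) = \Delta r \ge (N-1)\,\sqrt{k}\,\coth\!\left(\sqrt{k}\,r\right) \ge (N-1)\,\sqrt{k} \qquad \forall (r,\theta) \in \mathbb{R}^+ \times \mathbb{S}^{N-1}.
\end{equation*}

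The second step is to test this pointwise bound against $f^2$ for $f \in C^1_c(M)$ and integrate. Multiplying by $f^2$, integrating over $M \setminus B_\varepsilon$ and applying the divergence theorem to the vector field $f^2 \nabla r$ yields
\begin{equation*}
(N-1)\,\sqrt{k} \int_{M \setminus B_\varepsilon} f^2 \, d\nu \le \int_{M \setminus B_\varepsilon} f^2 \, \Delta r \, d\nu = -2 \int_{M \setminus B_\varepsilon} f \, \nabla f \cdot \nabla r \, d\nu + \text{(boundary term on } S_\varepsilon\text{)},
\end{equation*}
and the boundary term is $O(\varepsilon^{N-1})$ as $\varepsilon \downarrow 0$ thanks to \eqref{lap-euc} and the boundedness of $f$. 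Since $|\nabla r| = 1$, Cauchy-Schwarz then gives
\begin{equation*}
(N-1)\,\sqrt{k} \, \| f \|_{L^2(M)}^2 \le 2 \, \| f \|_{L^2(M)} \, \| \nabla f \|_{L^2(M)},
\end{equation*}
which, after dividing by $\| f \|_{L^2(M)}$ and invoking the variational characterization $\inf \mathrm{spec}(-\Delta) = \inf_{f} \| \nabla f \|_{L^2}^2 / \| f \|_{L^2}^2$, produces precisely the spectral lower bound $k(N-1)^2/4$.

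For sharpness I would test against the known $L^2$-eigenfunction asymptotics on $\mathbb{H}^N$ of curvature $-k$: setting $v(r) := \sinh(\sqrt{k}\,r)^{-(N-1)/2}$, a direct radial computation shows that $v$ satisfies $-\Delta v = \tfrac{k(N-1)^2}{4}\,v + O(\sinh(\sqrt{k}\,r)^{-2})\,v$, hence $v$ behaves as a generalized eigenfunction at the edge of the spectrum. Plugging $f_R := \eta_R \, v$, with $\eta_R$ a smooth radial cut-off supported in a spherical shell $\{R \le r \le 2R\}$ receding to infinity, into the Rayleigh quotient and computing on the model $\mathbb{H}^N$ with the weight $\sinh(\sqrt{k}\,r)^{N-1}$ shows that the Rayleigh quotient converges to $k(N-1)^2/4$ as $R \to \infty$, which matches the lower bound and proves that the constant is attained.

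The technical points that require the most care are the limiting argument at the pole $o$ in the integration by parts (straightforward given the Euclidean-like asymptotics $\meas(S_\varepsilon) \sim \omega_{N-1}\,\varepsilon^{N-1}$, which follows from \eqref{lap-euc}) and the explicit verification that the chosen test functions on $\mathbb{H}^N$ realize the spectral edge. Neither step is truly an obstacle: the heart of the argument is the very clean inequality $\Delta r \ge (N-1)\sqrt{k}$, so in a sense the whole content of McKean's Theorem is packaged into the Laplacian-comparison with the hyperbolic space and the trick of contracting the gradient of distance against $f^2$.
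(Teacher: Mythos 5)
Your proof is correct, but it follows a genuinely different route from the paper's. Both arguments open with the same Laplacian-comparison step, namely $\Delta r = \mathsf{m}(r,\theta) \ge (N-1)\sqrt{k}\coth(\sqrt{k}r) \ge (N-1)\sqrt{k}$ coming from \eqref{comp-sect-2} with $\psi(r) = \sinh(\sqrt{k}r)/\sqrt{k}$. From there, however, you close the argument by the classical integration-by-parts trick: multiply the pointwise bound by $f^2$, integrate, apply the divergence theorem to $f^2\nabla r$, and use Cauchy--Schwarz together with $|\nabla r|=1$. (Incidentally, the boundary term on $S_\varepsilon$ is in fact $-\int_{S_\varepsilon} f^2\,d\sigma \le 0$ once you account for the outward normal to $M\setminus B_\varepsilon$ being $-\nabla r$, so it works in your favor and you do not even need it to vanish, although your $O(\varepsilon^{N-1})$ estimate is also correct.) The paper's proof (Theorem~\ref{thm:mckean}) instead funnels the Laplacian bound through the one-dimensional weighted machinery: it defines $\psi_A(r,\theta) := A(r,\theta)^{1/(N-1)}$, applies Lemma~\ref{lem-mck} to bound the supremum $\mathcal{B}(w,2)$ of Proposition~\ref{thm:ko} by $1/(\sqrt{k}(N-1))$ along each angular direction, and integrates over $\theta$. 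Your argument is more elementary and self-contained, and it is arguably closer to the standard modern presentation of McKean's theorem; the paper's route has the advantage of being structurally uniform with the proofs of Theorems~\ref{teo} and~\ref{teo2}, where the Hardy-type criterion of Proposition~\ref{thm:ko} is genuinely indispensable and cannot be replaced by a one-line integration by parts. Your sharpness check on $\mathbb{H}^N$ via $v(r)=\sinh(\sqrt{k}r)^{-(N-1)/2}$ truncated on shells $\{R\le r\le 2R\}$ is also sound: on the plateau $(v')^2\psi^{N-1} \to \tfrac{(N-1)^2 k}{4}$ while $v^2\psi^{N-1}\equiv k^{-(N-1)/2}$ is constant, so the Rayleigh quotient tends to $k(N-1)^2/4$ as $R\to\infty$, and the cutoff contributions are of lower order; this is consistent with the paper's remark that the bound is attained by the hyperbolic space.
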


Note that Theorem \ref{mck-orig} can be rephrased equivalently in this way: on any Cartan-Hadamard manifold $ \mathbb{M}^N $ with sectional curvatures bounded from above by $ -k<0 $, the \emph{Poincar\'e inequality} 
\begin{equation}\label{pp} 
\left\| f \right\|_{L^2(\mathbb{M}^N)} \le \frac{2}{\sqrt{k} \left( N-1 \right) } \left\| \nabla f \right\|_{L^2(\mathbb{M}^N)} \qquad \forall f \in C^1_c \!\left(\mathbb{M}^N\right)  
\end{equation}
holds. This is the form of the statement that we will refer to in the sequel.

\vspace{0.1cm}

The original proof of McKean is far from trivial. In contrast to what we have shown above, in the pure Poincar\'e case ($ p=2 $) it is enough to establish the inequality for \emph{radial} functions, since the extension from radial to nonradial is straightforward, see Theorem \ref{thm:mckean} below. In order to prove that the weight associated with the volume measure on $ \mathbb{M}^N $ (recall Subsection \ref{nfrg}) satisfies a differential inequality of the type of \eqref{hp-psi} (actually of second order) w.r.t.~the variable $r$, which is at the core of the problem, the author used several techniques that involve the second fundamental form, Jacobi fields and the so-called index form of Morse theory. Here we will only employ arguments from weighted one-dimensional inequalities, in the spirit of Section \ref{sec:radial}. The main nontrivial tools behind our methods are the Laplacian-comparison theorems recalled in Subsection \ref{lc}, which allow one to pass from model manifolds to general manifolds with little effort. Furthermore, we are able to slightly extend the validity of McKean's Theorem, as we require that only the \emph{radial} sectional curvatures are bounded away from zero.

\vspace{0.1cm}

Before carrying out our alternative proof, we need a preliminary result.  

\begin{lemma}\label{lem-mck}
	Let $ \psi \in \mathcal{A} $. Let $ N \in \mathbb{N} $ with $N \ge 2$. If 
	\begin{equation}\label{hp-psi}
	\frac{\psi^\prime(r)}{\psi(r)} \ge \sqrt{k} \qquad \forall r>0 \, , \quad \text{for some } \, k>0 \, ,
	\end{equation}
	then
	\begin{equation}\label{hp-psi-cons}
	\sup_{r\in(0,\infty)} \left(\int_0^r\psi(s)^{N-1} \, ds\right)^{\frac{1}{2}}\left(\int_r^\infty \frac{1}{\psi(s)^{N-1}} \, ds \right)^{\frac 1 2}\leq \dfrac{1}{\sqrt{k}\left( N-1 \right) }  \, . 
	\end{equation}
\end{lemma}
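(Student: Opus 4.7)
The plan is to convert the hypothesis \eqref{hp-psi} into a one-sided differential inequality for the weight $\psi^{N-1}$ and then to estimate each of the two factors in \eqref{hp-psi-cons} separately by integrating the resulting differential inequality, so that after multiplication the $\psi(r)^{N-1}$ terms cancel.

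\smallskip

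First, I would rewrite \eqref{hp-psi} as
\begin{equation*}
\frac{d}{dr}\!\left(\psi(r)^{N-1}\right) = (N-1)\,\psi(r)^{N-2}\,\psi'(r) \ge (N-1)\sqrt{k}\,\psi(r)^{N-1}\qquad \forall r>0,
\end{equation*}
and correspondingly
\begin{equation*}
\frac{d}{dr}\!\left(\psi(r)^{-(N-1)}\right) = -(N-1)\,\frac{\psi'(r)}{\psi(r)^{N}} \le -(N-1)\sqrt{k}\,\psi(r)^{-(N-1)}\qquad \forall r>0.
\end{equation*}
Integrating the first inequality between $0$ and $r$ (and using $\psi(0)=0$, which makes the boundary term disappear) gives
\begin{equation*}
\int_0^r \psi(s)^{N-1}\,ds \le \frac{1}{(N-1)\sqrt{k}}\,\psi(r)^{N-1}.
\end{equation*}
Integrating the second inequality between $r$ and $\infty$, and noting that $\psi(s)^{N-1}\to\infty$ as $s\to\infty$ (again a consequence of \eqref{hp-psi}, which forces $\psi$ to grow at least like $e^{\sqrt{k}\,s}$), gives
\begin{equation*}
\int_r^\infty \frac{ds}{\psi(s)^{N-1}} \le \frac{1}{(N-1)\sqrt{k}\,\psi(r)^{N-1}}.
\end{equation*}

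\smallskip

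Multiplying these two bounds and taking square roots yields, for every $r>0$,
\begin{equation*}
\left(\int_0^r\psi(s)^{N-1}\,ds\right)^{\!1/2}\!\left(\int_r^\infty \frac{ds}{\psi(s)^{N-1}}\right)^{\!1/2}\!\le \sqrt{\frac{\psi(r)^{N-1}}{(N-1)\sqrt{k}}\cdot\frac{1}{(N-1)\sqrt{k}\,\psi(r)^{N-1}}}=\frac{1}{\sqrt{k}\,(N-1)},
\end{equation*}
which is exactly \eqref{hp-psi-cons} after taking the supremum in $r$.

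\smallskip

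There is no serious obstacle here: the only mildly delicate points are checking that the boundary contribution at $r=0$ in the first integration by parts vanishes (ensured by $\psi(0)=0$ and $N\ge 2$) and that the limit at infinity in the second one vanishes (ensured by exponential growth of $\psi$ derived from \eqref{hp-psi}). Both are immediate from $\psi\in\mathcal{A}$ combined with \eqref{hp-psi}.
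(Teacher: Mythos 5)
Your proof is correct and follows essentially the same route as the paper's: recast \eqref{hp-psi} as a first-order differential inequality for $\psi^{N-1}$, integrate it once from $0$ to $r$ and once from $r$ to $\infty$ (using $\psi(0)=0$ and the exponential lower bound on $\psi$ to handle the boundary terms), then multiply the two resulting estimates so that $\psi(r)^{N-1}$ cancels. The only cosmetic difference is that you carry the constant $k$ through the computation directly, whereas the paper first normalizes to $k=1$ and then recovers the general case by the scaling $r\mapsto\sqrt{k}\,\psi(r/\sqrt{k})$.
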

\begin{proof}
	For convenience, let us assume in the first place that $ k=1 $: the general case will be briefly discussed at the end of the proof. Given $ \varepsilon>0 $, the integration of \eqref{hp-psi} from $\varepsilon$ to $r$ yields
	\begin{equation}\label{eq:stima-psi-below}
	\psi(r) \ge \psi(\varepsilon) \, e^{r-\varepsilon} \qquad \forall r \ge \varepsilon \, .
	\end{equation}
	On the other hand, inequality \eqref{hp-psi} can be rewritten as  
	\begin{equation*}\label{disfond McKean}
	\psi(r)^{N-1} \le \frac{1}{N-1} \, \dfrac{d}{dr} \left( \psi^{N-1} \right) \! (r) \qquad \forall r > 0 \, ,
	\end{equation*}
	so that an integration between $0$ and $ r $ yields (recall that $ \psi(0)=0 $) 
	\begin{equation}\label{est-psi-parts McKean}
	\int_{0}^r \psi(s)^{N-1} \, ds \le \dfrac{1}{N-1} \, \psi(r)^{N-1} \qquad \forall r > 0 \, .
	\end{equation}
	Similarly we obtain 
	\begin{equation}\label{est-psi-parts McKean-bis}
	\int_{r}^\infty \frac{1}{\psi(s)^{N-1}} \, ds \le \dfrac{1}{N-1} \, \frac{1}{\psi(r)^{N-1}} \qquad \forall r > 0 \, ,
	\end{equation}
	where we have exploited the fact that $ \lim_{r \to \infty} \psi(r) = \infty $, trivial consequence of \eqref{eq:stima-psi-below}. By combining \eqref{est-psi-parts McKean} and \eqref{est-psi-parts McKean-bis}, we finally deduce that
	$$
	\int_{0}^r \psi(s)^{N-1} \, ds \int_{r}^\infty \frac{1}{\psi(s)^{N-1}} \, ds \le \frac{1}{(N-1)^2} \qquad \forall r >0 \, ,
	$$
	i.e.~\eqref{hp-psi-cons} for $ k=1 $. The general case follows by applying the argument to $ r \mapsto  \sqrt{k} \, \psi\big( {r}/{\sqrt{k}} \big) $. 
\end{proof}

We are now ready to give a direct proof of McKean's Theorem.

\begin{theorem}\label{thm:mckean}
	Let $ \mathbb{M}^N $ be a Cartan-Hadamard manifold such that
	\begin{equation}\label{hp-sect} 
	\mathrm{Sect}_\omega(x) \le -k \qquad \forall x \in \mathbb{M}^N \, ,
	\end{equation}
	for some $ k>0 $. Then the Poincar\'e inequality \eqref{pp} holds.
\end{theorem}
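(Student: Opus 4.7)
The plan is to reduce the full Poincar\'e inequality to a one-parameter family of one-dimensional weighted Poincar\'e inequalities, one for each angle $\theta$, and then integrate over the sphere. The three ingredients I will combine are the Laplacian comparison recalled in Subsection~\ref{lc}, Lemma~\ref{lem-mck}, and Proposition~\ref{thm:ko} at the endpoint $p=2$. Observe that the multiplicative prefactor in \eqref{sob-one-2} collapses to $(1+1)^{1/2}(1+1)^{1/2}=2$ when $p=2$, which, once paired with the uniform bound $1/[\sqrt{k}(N-1)]$ furnished by Lemma~\ref{lem-mck}, will produce exactly the McKean constant $2/[\sqrt{k}(N-1)]$.

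First, I would unfold the curvature hypothesis through Laplacian comparison. The assumption $\mathrm{Sect}_\omega(x)\le -k$ is precisely \eqref{comp-sect} with the space-form choice $\psi(r)=\sinh(\sqrt{k}\,r)/\sqrt{k}$, which lies in $\mathcal{A}$ and satisfies $\psi''/\psi\equiv k$ and $\psi'/\psi=\sqrt{k}\coth(\sqrt{k}\,r)\ge\sqrt{k}$. Applying \eqref{comp-sect-2} and \eqref{lap-m} gives
\[
\frac{\partial_r A(r,\theta)}{A(r,\theta)}\ =\ \mathsf{m}(r,\theta)\ \ge\ (N-1)\sqrt{k}\,\coth(\sqrt{k}\,r)\ \ge\ (N-1)\sqrt{k}
\]
for all $r>0$ and $\theta\in\mathbb{S}^{N-1}\setminus\mathcal{P}$. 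Consequently, setting $\psi_\theta(r):=A(r,\theta)^{1/(N-1)}$, I obtain $\psi_\theta'/\psi_\theta\ge\sqrt{k}$ on $(0,\infty)$ for every admissible $\theta$.

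Next, I would invoke Lemma~\ref{lem-mck} direction-wise. Strictly speaking $\psi_\theta$ need not lie in $\mathcal{A}$, since $\psi_\theta'(0)=[\lim_{r\to 0}A(r,\theta)/r^{N-1}]^{1/(N-1)}$ is positive but not generally equal to $1$. This is essentially the only subtlety of the proof; however, both the hypothesis $\psi'/\psi\ge\sqrt{k}$ and the conclusion of Lemma~\ref{lem-mck} are invariant under the rescaling $\psi\mapsto c\,\psi$ (the two integrals involved pick up reciprocal powers of $c^{N-1}$), so it suffices to apply the lemma to the normalised $\hat\psi_\theta:=\psi_\theta/\psi_\theta'(0)\in\mathcal{A}$ and re-inflate. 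Thus, for every $\theta\in\mathbb{S}^{N-1}\setminus\mathcal{P}$,
\[
\sup_{r>0}\left(\int_0^r A(s,\theta)\,ds\right)^{\!1/2}\!\left(\int_r^\infty\frac{1}{A(s,\theta)}\,ds\right)^{\!1/2}\ \le\ \frac{1}{\sqrt{k}\,(N-1)},
\]
and Proposition~\ref{thm:ko} with $p=2$ and $w(r)=A(r,\theta)$ yields the one-dimensional weighted Poincar\'e inequality
\[
\int_0^\infty g(r)^2\,A(r,\theta)\,dr\ \le\ \frac{4}{k(N-1)^2}\int_0^\infty g'(r)^2\,A(r,\theta)\,dr \qquad \forall g\in C^1_c([0,\infty)).
\]

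Finally, I would assemble \eqref{pp} by integrating on the sphere. Given $f\in C^1_c(\mathbb{M}^N)$, apply the previous display to $g(r)=f(r,\theta)$ for a.e.~$\theta$, then integrate in $\theta\in\mathbb{S}^{N-1}$. Using the polar representation \eqref{LP} on the left-hand side and the pointwise bound $|\partial_r f|^2\le|\nabla f|^2$ on the right-hand side (together with the fact that $\mathcal{P}$ is negligible), one concludes
\[
\|f\|_{L^2(\mathbb{M}^N)}^2\ \le\ \frac{4}{k(N-1)^2}\int_{\mathbb{M}^N}|\partial_r f|^2\,d\nu\ \le\ \frac{4}{k(N-1)^2}\|\nabla f\|_{L^2(\mathbb{M}^N)}^2,
\]
which is precisely \eqref{pp}. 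The hardest part is really the scale-invariance remark that allows the angle-dependent weights $A(r,\theta)$ to be handled by a lemma nominally stated only for $\psi\in\mathcal{A}$; once this observation is made, everything else is a direct application of the Muckenhoupt--Kufner-type tools and of the Laplacian comparison already recalled in Sections~\ref{sect2} and \ref{sec:radial}.
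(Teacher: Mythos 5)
Your proof is correct and follows the paper's argument essentially step by step: Laplacian comparison with the hyperbolic model function $\psi(r)=\sinh(\sqrt{k}\,r)/\sqrt{k}$, a direction-wise application of Lemma~\ref{lem-mck} and of Proposition~\ref{thm:ko} at $p=2$ (where the Muckenhoupt prefactor $(1+p/2)^{1/p}(1+2/p)^{1/2}$ collapses to $2$), and finally integration over $\mathbb{S}^{N-1}$ using $|\partial_r f|\le|\nabla f|$. Your scale-invariance remark addresses more carefully a point the paper handles by simply asserting $\psi_A(\cdot,\theta)\in\mathcal{A}$; under the standard convention that $A(r,\theta)/r^{N-1}\to 1$ as $r\to 0$ the normalisation $\psi_\theta'(0)=1$ is in fact automatic, but your observation makes the step robust regardless of how $d\theta$ is normalised, so it is a harmless and welcome clarification rather than the ``hardest part'' of the argument.
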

\begin{proof}
	Thanks to \eqref{hp-sect}, we can apply the Laplacian-comparison results recalled in Subsection \ref{lc} (in particular \eqref{comp-sect-2}) to the explicit model function $ \psi(r) = \sinh\!\big(\sqrt{k} r \big)/\sqrt{k} $, which corresponds to the hyperbolic space of curvature $-k$ and satisfies
	\begin{equation*}\label{hyp-k}
	\frac{\psi^{\prime\prime}(r)}{\psi(r)} = k \qquad \forall r>0 \, .
	\end{equation*} 
	Upon recalling identity \eqref{lap-m}, we have:
	\begin{equation*}\label{c1}
	\begin{gathered}
	\frac{\frac{\partial}{\partial r} A(r,\theta) }{A(r,\theta)} = \mathsf{m}(r,\theta)  \ge (N-1) \, \frac{\psi^\prime(r)}{\psi(r)} = (N-1)\, \sqrt{k} \, \coth\!\big( \sqrt{k} r \big) \ge (N-1) \, \sqrt{k} \\
	 \forall (r,\theta) \in \mathbb{R}^+ \times \left( \mathbb{S}^{N-1} \setminus \mathcal{P} \right) ,
	\end{gathered}
	\end{equation*}  
	namely
	\begin{equation}\label{c2}
	\frac{\frac{\partial}{\partial r} {\psi_A}(r,\theta) }{{\psi_A}(r,\theta)} \ge  \sqrt{k} \quad \forall (r,\theta) \in \mathbb{R}^+ \times \left( \mathbb{S}^{N-1} \setminus \mathcal{P} \right) , \qquad \text{where } \, {\psi_A}(r,\theta) := A(r,\theta)^{\frac{1}{N-1}} \, .
	\end{equation}
	By reasoning similarly to the proof of Theorem \ref{teo}, it is not difficult to check that $ \psi_A(\cdot,\theta) \in \mathcal{A} $ for all $ \theta \in \mathbb{S}^{N-1} \setminus \mathcal{P} $. Hence in view of \eqref{c2} we can exploit Lemma \ref{lem-mck}, which ensures that 
	\begin{equation*}\label{hp-psi-cons-A}
	\sup_{r\in(0,\infty)} \left(\int_0^r \psi_A(s,\theta)^{N-1} \, ds\right)^{\frac{1}{2}}\left(\int_r^\infty \frac{1}{\psi_A(s,\theta)^{N-1}} \, ds \right)^{\frac 1 2}\leq \dfrac{1}{\sqrt{k}\left( N-1 \right) } \qquad \forall \theta \in \mathbb{S}^{N-1}  \setminus \mathcal{P} \, . 
	\end{equation*}
	As a consequence, from Proposition \ref{thm:ko} with $ p=2 $ and $ w(r) = \psi_A(r,\theta) $ we deduce that
	\begin{equation}\label{hp-psi-cons-B}
	\int_0^\infty \left|g(r)\right|^2 A(r,\theta) \, dr \le \dfrac{4}{{k}\left( N-1 \right)^2} \, \int_0^\infty \left|g^\prime(r)\right|^2  A(r,\theta) \, dr \quad \, \forall  g \in C^1_c([0,\infty)) \, , \quad \forall \theta \in  \mathbb{S}^{N-1} \setminus \mathcal{P} \, .
	\end{equation}
	On the other hand, if $ f \in C^1_c(\mathbb{M}^N) $ then $ r \mapsto f(r,\theta) \in C^1_c([0,\infty)) $ for every $ \theta \in \mathbb{S}^{N-1} $, so that by using \eqref{hp-psi-cons-B} with $ g(r) = f(r,\theta) $ and integrating over $ \mathbb{S}^{N-1} $ we end up with
	$$
	\begin{aligned}
	\int_{\mathbb{S}^{N-1}} \int_0^\infty \left|f(r,\theta)\right|^2 A(r,\theta) \, dr \, d \theta \le & \, \dfrac{4}{{k}\left( N-1 \right)^2} \, \int_{\mathbb{S}^{N-1}} \int_0^\infty \left| \frac{\partial}{\partial r} {f}(r,\theta) \right|^2 A(r,\theta) \, dr \, d \theta \\
	\le & \, \dfrac{4}{{k}\left( N-1 \right)^2} \, \int_{\mathbb{S}^{N-1}} \int_0^\infty \left| \nabla {f}(r,\theta) \right|^2 A(r,\theta) \, dr \, d \theta \, ,
	\end{aligned}
	$$
	namely \eqref{pp} thanks to \eqref{LP} and Fubini's Theorem. 
\end{proof}

By combining the techniques of proof of Theorems \ref{teo} and \ref{thm:mckean}, one can deduce the validity of the Poincar\'e inequality (with a generic constant) upon only requiring that the radial sectional curvatures are bounded above by  $ -k $ in the \emph{complement of a ball}. Because the argument does not deviate significantly from the previous ones, we omit the proof and refer to \cite[Subsection 4.4.1]{R} for the details.

\begin{theorem}\label{mckball}
	Let $\mathbb{M}^N$ be a Cartan-Hadamard manifold such that
	\begin{equation*}\label{hp-sect_McKean bis} 
	\mathrm{Sect}_\omega(x) \le -k \qquad \forall x \in \mathbb{M}^N\setminus B_{R_0} \,,
	\end{equation*}
	for some $ k,R_0>0$. Then there exists a positive constant $C$, depending only on $k,R_0$, such that
	\begin{equation*}\label{poin-mckbis}
	\left\| f \right\|_{L^2(\mathbb{M}^N)} \le C \left\| \nabla f \right\|_{L^2(\mathbb{M}^N)} \qquad \forall f \in C^1_c \!\left(\mathbb{M}^N\right) .
	\end{equation*}
\end{theorem}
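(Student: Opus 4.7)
My plan is to adapt the proof of Theorem \ref{thm:mckean}, replacing the global one-dimensional inequality of Lemma \ref{lem-mck} with a localized version that uses the bound $\mathrm{Sect}_\omega \le -k$ only outside $B_{R_0}$, in the spirit of the proofs in Section \ref{sec:radial}. I would first pass to a one-dimensional reformulation along each direction $\theta \in \mathbb{S}^{N-1}$ by setting $\psi_A(r,\theta):=A(r,\theta)^{1/(N-1)}\in\mathcal{A}$, so that Proposition \ref{thm:ko} applied with $p=2$ and $w(r)=\psi_A(r,\theta)^{N-1}$, followed by integration over $\mathbb{S}^{N-1}$ exactly as at the end of the proof of Theorem \ref{thm:mckean}, will yield the Poincar\'e inequality once one shows that the Muckenhoupt-type quantity
\[
B(\theta):=\sup_{r>0}\left(\int_0^r \psi_A(s,\theta)^{N-1}\,ds\right)^{\!1/2}\!\left(\int_r^\infty\frac{ds}{\psi_A(s,\theta)^{N-1}}\right)^{\!1/2}
\]
is bounded by a constant depending only on $N$, $k$, $R_0$, uniformly in $\theta$.

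To establish this uniform bound I would construct a model function $\psi \in \mathcal{A}$ with $\psi(r)=r$ on $[0,R_0]$ and $\psi''=k\psi$ on $[R_0,\infty)$, matched via $\psi(R_0)=R_0$ and $\psi'(R_0)=1$ (rounding off the $C^1$ corner at $r=R_0$ by a standard smoothing, which survives the limit). Since $\psi''/\psi$ vanishes on $[0,R_0]$ and equals $k$ on $[R_0,\infty)$, hypothesis \eqref{hp-sect_McKean bis} combined with the Cartan-Hadamard condition gives $\mathrm{Sect}_\omega \le -\psi''/\psi$ throughout $\mathbb{M}^N$, so Laplacian comparison \eqref{comp-sect-2} yields $\psi_A'/\psi_A \ge \psi'/\psi$ everywhere. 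A brief Riccati analysis of $v:=\psi'/\psi$ on $[R_0,\infty)$, where $v(R_0)=1/R_0$ and $v'=k-v^2$, shows that $v \ge c_0:=\min(1/R_0,\sqrt{k})>0$ throughout $[R_0,\infty)$, while $\psi_A(\cdot,\theta)$ is nondecreasing on $(0,\infty)$. Arguing as in the proof of Lemma \ref{lem-mck}, integration by parts then produces the pair of bounds
\[
\int_{R_0}^r \psi_A^{N-1}\,ds \le \frac{\psi_A(r,\theta)^{N-1}}{(N-1)c_0}, \qquad \int_r^\infty \frac{ds}{\psi_A^{N-1}} \le \frac{1}{(N-1)c_0\,\psi_A(r,\theta)^{N-1}} \qquad (r\ge R_0),
\]
while monotonicity of $\psi_A$ handles the missing pieces: $\int_0^r \psi_A^{N-1}\,ds \le r\,\psi_A(r,\theta)^{N-1}$ and $\int_r^{R_0}\psi_A^{-(N-1)}\,ds \le R_0/\psi_A(r,\theta)^{N-1}$ when $r \le R_0$, and $\int_0^{R_0}\psi_A^{N-1}\,ds \le R_0\,\psi_A(R_0,\theta)^{N-1} \le R_0\,\psi_A(r,\theta)^{N-1}$ when $r \ge R_0$. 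Splitting the sup in $B(\theta)$ into $r \in (0,R_0]$ and $r \ge R_0$ and combining the corresponding estimates, the factors $\psi_A(r,\theta)^{N-1}$ always cancel in the product, leaving an estimate $B(\theta)^2 \le R_0^2 + R_0/((N-1)c_0) + 1/((N-1)^2 c_0^2)$ that depends only on $N$, $k$, $R_0$.

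The main obstacle is precisely the absence, under the sole Cartan-Hadamard assumption, of any $\theta$-uniform upper bound on $\psi_A(s,\theta)$ for $s \in [0,R_0]$, so that $\int_0^{R_0}\psi_A^{N-1}\,ds$ cannot possibly be controlled by an absolute constant. The resolution, implemented above, is never to try to do so: the monotonicity of $\psi_A$, which itself follows from the Euclidean Laplacian comparison on Cartan-Hadamard manifolds, always permits one to absorb this integral into the denominator $\psi_A(r,\theta)^{N-1}$ coming from $\int_r^\infty \psi_A^{-(N-1)}\,ds$, so that the $\theta$-dependence cancels exactly at the level of the product $B(\theta)$. A secondary technical nuisance is the failure of the model $\psi$ to be $C^\infty$ at $r=R_0$, handled by a standard smoothing-plus-limit argument.
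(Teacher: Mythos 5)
Your proposal is correct, and it implements precisely what the paper indicates for this omitted proof, namely combining the direction-wise Laplacian-comparison reduction of Theorem \ref{thm:mckean} (via $\psi_A(\cdot,\theta)$ and Proposition \ref{thm:ko} at $p=2$) with the localized Riccati/comparison estimates in the spirit of Theorem \ref{teo}. In particular you identify the genuine subtlety -- that $\int_0^{R_0}\psi_A^{N-1}\,ds$ has no $\theta$-uniform bound, but the monotonicity of $\psi_A$ makes this factor cancel in the Muckenhoupt product -- and your final bound on $B(\theta)$ is monotone decreasing in $N$, so it yields a constant depending only on $k$ and $R_0$ as claimed.
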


\medskip 

\noindent{\textbf{Acknowledgements.}} The authors thank the ``Gruppo Nazionale per l'Analisi Matematica, la Probabilit\`a e le loro Applicazioni'' (GNAMPA) of the ``Istituto Nazionale di Alta Matematica'' (INdAM, Italy). M.M.~was partially supported by the GNAMPA  Project 2017 ``Equazioni Diffusive Non-lineari in Contesti Non-Euclidei e Disuguaglianze Funzionali Associate'' and both authors were supported by the GNAMPA Project 2018 ``Problemi Analitici e Geometrici Associati a EDP Non-Lineari Ellittiche e Paraboliche''. M.M.~also thanks Prof.~Jean Dolbeault for some inspiring discussions regarding Section \ref{sect:nonrad}.

\end{document}